\theoremstyle{plain}
 \newtheorem{prop}{Proposition}[section]
 \newtheorem{lem}{Lemma}[section]
\theoremstyle{definition}
 \newtheorem{dfn}{Definition}[section]
 \newtheorem*{con2}{Condition A}
 \newtheorem*{the1}{Theorem A}
\newtheorem*{the2}{Theorem B}
\newtheorem*{open}{Open problem}
\theoremstyle{remark}
 \newtheorem{rem}{Remark}[section]
 \numberwithin{equation}{section}
\renewcommand{\leq}{\leqslant}
\renewcommand{\geq}{\geqslant}
\def\e{\varepsilon}
\title[Scale pressure for amenable group actions]{Scale pressure for amenable group actions}
\subjclass[2010]{37A15,37A35}
\keywords{Scale pressure;Variational principle;amenable group;
Pseudo-orbits.}
\author[Dandan Cheng]{Dandan Cheng}
\author [Qian Hao]{Qian Hao}
\author[Zhiming Li]{Zhiming Li }
\address{
School of Mathematics,  
Northwest University,   
Xi'an, 710127,
P.R.China}
\email{china-lizhiming@163.com}
\begin{document}

\vspace{18mm} \setcounter{page}{1} \thispagestyle{empty}

\begin{abstract}
In this paper we introduce the notion of scale pressure and measure theoretic scale pressure for amenable group actions. A variational principle for amenable group actions is presented. We also describe these quantities by pseudo-orbits. Moreover, we prove that if $G$ is a finitely generated countable discrete amenable group, then the scale pressure of $G$ coincides with the scale pressure of $G$ with respect to pseudo-orbits.
\end{abstract}

\maketitle
\section{Introduction}
The concept of entropy, introduced into the realm of dynamical systems more than fifty years ago, has become an important ingredient in the characterization of the complexity of dynamical systems. In the late nineties, M. Gromov \cite{gro} proposed a new dynamical
concept of dimension that was meant to extend the usual topological
dimension to broader contexts. This notion, called mean dimension, is defined for continuous maps on compact
metric spaces in terms of the growth rate of refinements of
coverings of the phase space and was shown to be hardly computable
in general. This invariant of topological dynamical systems was used
by E. Lindenstrauss and B. Weiss introduced in \cite{LW} (it is a
very interesting work to study which other properties of the entropy
are maintained by the mean dimension and the metric mean dimension)
to answer a long-standing question in topological dynamics: does
every minimal topological dynamical system embed into $([0,
1]^\mathbb{Z}, \sigma)$? The answer is negative, since any system
embeddable in $([0, 1]^\mathbb{Z}, \sigma)$ has topological mean
dimension at most one and \cite{LW} constructed a minimal system
with a mean dimension strictly greater than one. This concept,
inspired by the topological entropy, turns out to be a metric
version of Gromov's notion. Moreover, it is an upper bound for the
mean dimension and, in general, easier to estimate. However, it
depends on the metric used, while the topological entropy is metric
independent. The upper and
lower metric mean dimensions, unlike Gromov's concept, depend on
the metric adopted on the space and are nonzero only if the
topological entropy of the dynamics is infinite. Since continuous systems on manifold with infinite topological entropy are generic \cite{Yan}, the metric mean dimension exhibits several intrinsic
features which makes it a rather compelling notion to be studied. There are many elegant approaches to mean dimensions \cite{LT,LT1,V} and mean dimensions with potential \cite{MT}.

This work was intended as an attempt to give a unified way to analyze systems with infinite entropy by introducing a notion of pressure-like complexity functions, which is called \emph{scale pressure}, with respect to a scale function $s$ and a potential function $f$. A variational principle of scale pressure is presented. We also describe these quantities by pseudo-orbits.
\section{Scale Pressures}

More precisely, suppose that $X$ is a compact metric space with metric $d$, $\mathcal{B}$ is the Borel $\sigma$-algebra, $G$ is a countable discrete amenable group and $C(X,\mathbb{R})$ is the Banach algebra of real-valued continuous functions on $X$ equipped with the supreme norm: $\|\varphi\|:=\max\limits_{x\in X}|\varphi(x)|$, $\varphi\in C(X,\mathbb{R})$. Let $(X,G)$ be a $G$-action topological dynamical system, $\mathcal{M}_G(X)$ be collection of the $G$-invariant Borel probability measures of $X$ and $\mathcal{E}_G(X)$ be collection of the $G$-ergodic Borel probability measures of $X$. Recall that a group $G$ is said to be \emph{amenable} if there exists a sequence of finite subsets $\{F_n\}_{n\in \mathbb{N}}$ of $G$ which are asymptotically invariant, i.e.,$$\lim_{n\rightarrow+\infty}\frac{|F_n\vartriangle gF_n|}{|F_n|}=0, \text{ for all } g\in G.$$
Such sequences $\{F_n\}_{n\in \mathbb{N}}$ are called F{\o}lner sequences. We denote the collection of nonempty finite subsets of $G$ by $F(G)$. For each $F\in F(G)$, set metrics $d_F(x,y)=\max\limits_{g\in F}d(gx,gy)$.

Recall that a F{\o}lner sequence $\{F_n\}_{n\in \mathbb{N}}$ in $G$ is \emph{tempered} if there exists a constant $C$ which is independent of $n$ such that $$|\bigcup_{k<n}F_k^{-1}F_n|\leq C|F_n|,\text{ for all }  n\in\mathbb{N}.$$

\begin{dfn}
Let $\e>0$, $E,J,K\subseteq X$ and $F\in F(G)$. If for any $x\in K$ there exists $y\in J$ such that $d_F(x,y)< \e$,\ then we call $J$ is an\emph{ $(F,\e)$-spanning set of $K$}.\ If for any $x,y\in E$ we have $d_F(x,y)>\varepsilon$,\ then we call $E$ is an\emph{ $(F,\e)$-separated set}.
\end{dfn}
Let $sep_{\e,F}(X,G)$ denote the maximal cardinality of an $(F,\e)$-separated subset of $X$ and $spa_{\e,F}(X,G)$ denote the smallest cardinality of an $(F,\e)$-spanning subset of $X$.
\begin{dfn}
A function $s:(0,1)\to(0,\infty)$ is called a \emph{scale function}, if for any $\lambda\in (0,\infty)$, $\lim\limits_{x\rightarrow0}\frac{s(\lambda x)}{s(x)}=1.$ We denote by $\mathcal{S}$ (resp. $\mathcal{S}^*$) the set of all (resp. non-increasing) scale functions.
\end{dfn}
Let $\mathcal{S}^{**}=\{s\in\mathcal{S}^*: \limsup\limits_{\e\rightarrow0}\frac{\e\log\e}{s(\e)}=0\}.$

For any F{\o}lner sequence $\{F_n\}_{n\in \mathbb{N}}$, let $S_{F_n,\varphi}(x)=\sum\limits_{g\in F_n}\varphi(gx).$
For any\ $\varepsilon>0$,\ $s\in\mathcal{S}$, we put

\begin{eqnarray*}
&&Q_{\e,F_n}(\varphi,s):=\\
&&\inf\{\sum_{x\in J}\exp (s(\e)S_{F_n,\varphi}(x)):\ J\ is\ an \ (F_n,\varepsilon)\ spanning\ set\ of\ X\};
\end{eqnarray*}
\begin{eqnarray*}
&&P_{\e,F_n}(\varphi,s):=\\
&&\sup\{\sum_{x\in E}\exp (s(\e)S_{F_n,\varphi}(x)):\ E\ is\ an \ (F_n,\varepsilon)\ separated\ set\ of\ X\}.
\end{eqnarray*}

 Similarly, let
  \begin{eqnarray*}
  &&q_{ \e,F_n}(\varphi,s):=\\ &&\inf\{\sum_{A\in \alpha}\inf_{x\in A}\exp (s(\e)S_{F_n,\varphi}(x)):\ \alpha\ \mbox{is\ a\ finite\ cover\ with\ mesh}(\alpha,d_{F_n})<\e\};
  \end{eqnarray*}
\begin{eqnarray*}
&&p_{\e,F_n}(\varphi,s):=\\
&&\inf\{\sum_{A\in \alpha}\sup_{x\in A}\exp (s(\e)S_{F_n,\varphi}(x)):\ \alpha\ \mbox{is\ a\ finite\ cover\ with\ mesh}(\alpha,d_{F_n})<\e\},
\end{eqnarray*}
where mesh$(\alpha,d_{F_n}):=\max\limits_{A\in\alpha}\mbox{diam}_{d_{F_n}}(A)$.
\begin{rem}\label{re1}
\begin{item}
\item (1) $0\leq Q_{ \e,F_n}(\varphi,s)\leq\|\exp(s(\e)S_{F_n,\varphi})\|spa_{\e,F_n}(X,G)$, \\\ \ \,\,\,$0\leq P_{\e,F_n}(\varphi,s)\leq\|\exp(s(\e)S_{F_n,\varphi})\|sep_{\e,F_n}(X,G)$.\\
\item (2) $P_{\e,F_n}(0,s)=sep_{\e,F_n}(X,G)$, $Q_{\e,F_n}(0,s)=spa_{\e,F_n}(X,G)$.\\
\item (3) $Q_{\e,F_n}(\varphi,s)\leq P_{\e,F_n}(\varphi,s)\leq p_{\e,F_n}(\varphi,s)$.\\
\item (4) If $d(x,y)<\e$ implies $|\varphi(x)-\varphi(y)|\leq\delta$, then $$p_{\e,F_n}(\varphi,s)\leq
    e^{|F_n|\delta s(\e)}q_{\e,F_n}(\varphi,s).$$\\
\end{item}
\end{rem}




For $\varphi\in C(X,\mathbb{R})$, $s\in\mathcal{S}$ and $\e>0$, put
$$Q_{\e}(\varphi,s):=
\limsup_{n\rightarrow\infty}\frac{1}{|F_n|}\log Q_{\e,F_n}(\varphi,s);$$
$$P_{\e}(\varphi,s):=
\limsup_{n\rightarrow\infty}\frac{1}{|F_n|}\log P_{\e,F_n}(\varphi,s);$$

$$p_{\e}(\varphi,s):=\limsup_{n\rightarrow\infty}\frac{1}{|F_n|}\log p_{\e,F_n}(\varphi,s);$$

$$q_{\e}(\varphi,s):=\limsup_{n\rightarrow\infty}\frac{1}{|F_n|}\log q_{\e,F_n}(\varphi,s).$$


Let
$$Q(\varphi,s):=\limsup_{\e\rightarrow0}\frac{Q_{\e}(\varphi,s)}
{s(\e)}; $$
$$P(\varphi,s):=
\limsup_{\e\rightarrow0}
\frac{P_{\e}(\varphi,s)}{s(\e)};$$
$$ p(\varphi,s):=
\limsup_{\e\rightarrow0}
\frac{p_{\e}(\varphi,s)}{s(\e)};$$
$$ q(\varphi,s):=
\limsup_{\e\rightarrow0}
\frac{q_{\e}(\varphi,s)}{s(\e)}.$$

The following Proposition is a direct consequence of Remark \ref{re1} (3) and (4).

\begin{prop}\label{p12}For $s\in\mathcal{S}$, and $\varphi\in C(X,\mathbb{R})$,
$$ Q(\varphi,s)\leq  P(\varphi,s)\leq  p(\varphi,s)\leq  q(\varphi,s).$$
\end{prop}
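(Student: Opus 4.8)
The plan is to push each of the three pointwise inequalities supplied by Remark~\ref{re1} through the two successive limiting operations defining $Q,P,p,q$, checking that monotonicity survives every step and that the error term generated by part~(4) is washed out in the limit.

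First I would dispatch the chain $Q(\varphi,s)\le P(\varphi,s)\le p(\varphi,s)$. By Remark~\ref{re1}(3), for every $\e>0$ and every $n$ we have $Q_{\e,F_n}(\varphi,s)\le P_{\e,F_n}(\varphi,s)\le p_{\e,F_n}(\varphi,s)$. Applying the monotone map $\log$ and dividing by $|F_n|>0$ preserves the inequalities, and passing to $\limsup_{n\to\infty}$ yields $Q_\e(\varphi,s)\le P_\e(\varphi,s)\le p_\e(\varphi,s)$. Since $s(\e)>0$, dividing by $s(\e)$ keeps the order intact, and taking $\limsup_{\e\to0}$ delivers the first two desired inequalities. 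No obstruction arises here, as every operation involved is monotone.

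The last inequality $p(\varphi,s)\le q(\varphi,s)$ is where the real work lies, because Remark~\ref{re1}(4) carries an extra exponential factor. Since $X$ is compact, $\varphi$ is uniformly continuous, so I would set
$$\delta(\e):=\sup\{|\varphi(x)-\varphi(y)|:x,y\in X,\ d(x,y)<\e\},$$
which satisfies $\delta(\e)\to0$ as $\e\to0$ and exactly realizes the hypothesis of Remark~\ref{re1}(4). Then $p_{\e,F_n}(\varphi,s)\le e^{|F_n|\delta(\e)s(\e)}q_{\e,F_n}(\varphi,s)$, so after applying $\frac{1}{|F_n|}\log$ and $\limsup_{n\to\infty}$ we obtain $p_\e(\varphi,s)\le\delta(\e)s(\e)+q_\e(\varphi,s)$. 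Dividing by $s(\e)$ collapses the error term to the bare modulus $\delta(\e)$, giving $\frac{p_\e(\varphi,s)}{s(\e)}\le\delta(\e)+\frac{q_\e(\varphi,s)}{s(\e)}$.

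The main (and essentially only) obstacle is then to confirm that this error term vanishes under $\limsup_{\e\to0}$, which follows from subadditivity of $\limsup$ together with $\lim_{\e\to0}\delta(\e)=0$:
$$p(\varphi,s)=\limsup_{\e\to0}\frac{p_\e(\varphi,s)}{s(\e)}\le\lim_{\e\to0}\delta(\e)+\limsup_{\e\to0}\frac{q_\e(\varphi,s)}{s(\e)}=q(\varphi,s).$$
I note that, pleasantly, the scale function $s$ need not lie in $\mathcal{S}^{**}$ or even be non-increasing for this argument: only positivity of $s$ (used when dividing) and uniform continuity of $\varphi$ (used to force $\delta(\e)\to0$) are required.
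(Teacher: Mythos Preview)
Your proof is correct and follows exactly the approach the paper intends: the paper's own proof is nothing more than the one-line remark that the proposition is a direct consequence of Remark~\ref{re1}(3) and~(4), and you have faithfully unpacked those two ingredients through the limiting operations. Your handling of the $\delta(\e)$ error term from part~(4) is precisely the detail the paper leaves implicit.
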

Denote by $C(X,\mathbb{R}^+)$ the set of nonnegative continuous functions.
\begin{prop}\label{p1} For $\varphi\in C(X,\mathbb{R}^+)$, $s\in\mathcal{S}^*$,
$$Q(\varphi,s)=P(\varphi,s)=p(\varphi,s)=q(\varphi,s).$$

\end{prop}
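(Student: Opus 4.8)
By Proposition \ref{p12} we already have $Q(\varphi,s)\le P(\varphi,s)\le p(\varphi,s)\le q(\varphi,s)$, so it suffices to establish the single reverse inequality $q(\varphi,s)\le Q(\varphi,s)$; inserted into that chain, $q\le Q\le P\le p\le q$ forces all four quantities to coincide. The plan is to prove, at the level of a fixed F\o lner set $F_n$ and a fixed $\e$, the estimate $q_{\e,F_n}(\varphi,s)\le Q_{\e/2,F_n}(\varphi,s)$, and then to pass to the two successive $\limsup$'s.

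First I would fix $n$ and an arbitrary $(F_n,\e/2)$-spanning set $J$ of $X$. The $d_{F_n}$-balls $B_{d_{F_n}}(y,\e/2)$ centred at the points $y\in J$ form a finite cover $\alpha$ of $X$ with $\mathrm{mesh}(\alpha,d_{F_n})<\e$ (shrinking the radius by an arbitrarily small amount if one insists on the strict inequality, which is harmless in the limit). Since each centre $y$ lies in its own ball, $\inf_{x\in B_{d_{F_n}}(y,\e/2)}\exp(s(\e)S_{F_n,\varphi}(x))\le\exp(s(\e)S_{F_n,\varphi}(y))$, and testing the defining infimum of $q_{\e,F_n}$ against this particular cover gives $q_{\e,F_n}(\varphi,s)\le\sum_{y\in J}\exp(s(\e)S_{F_n,\varphi}(y))$. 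Here is exactly where both hypotheses enter: because $\varphi\in C(X,\mathbb{R}^+)$ we have $S_{F_n,\varphi}(y)\ge 0$, and because $s\in\mathcal{S}^*$ is non-increasing with $\e/2<\e$ we have $s(\e)\le s(\e/2)$, whence $\exp(s(\e)S_{F_n,\varphi}(y))\le\exp(s(\e/2)S_{F_n,\varphi}(y))$ termwise. Taking the infimum over all $(F_n,\e/2)$-spanning sets $J$ then yields the desired $q_{\e,F_n}(\varphi,s)\le Q_{\e/2,F_n}(\varphi,s)$.

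Applying the monotone operations $\frac{1}{|F_n|}\log(\cdot)$ and $\limsup_{n\to\infty}$ turns this into $q_{\e}(\varphi,s)\le Q_{\e/2}(\varphi,s)$ for every $\e$. Dividing by $s(\e)$ and writing $\frac{Q_{\e/2}(\varphi,s)}{s(\e)}=\frac{Q_{\e/2}(\varphi,s)}{s(\e/2)}\cdot\frac{s(\e/2)}{s(\e)}$, I would take $\limsup_{\e\to 0}$. The factor $\frac{s(\e/2)}{s(\e)}$ tends to $1$ by the defining property of a scale function (with $\lambda=\frac12$), while the factor $\frac{Q_{\e/2}(\varphi,s)}{s(\e/2)}$ is nonnegative, since $\varphi\ge 0$ forces each $Q_{\e/2,F_n}(\varphi,s)\ge 1$ and hence $Q_{\e/2}(\varphi,s)\ge 0$. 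Consequently $\limsup_{\e\to 0}\frac{Q_{\e/2}(\varphi,s)}{s(\e)}=\limsup_{\e'\to 0}\frac{Q_{\e'}(\varphi,s)}{s(\e')}=Q(\varphi,s)$, giving $q(\varphi,s)\le Q(\varphi,s)$ and completing the argument.

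The step I expect to be the main obstacle is this last interchange of the outer $\limsup$ with the rescaling of the argument of $s$: one must verify that replacing $s(\e)$ by $s(\e/2)$ in the denominator leaves the outer $\limsup$ unchanged, which is precisely the content of the scale-function axiom, and that the nonnegativity of $Q_{\e/2}(\varphi,s)/s(\e/2)$ legitimizes factoring out a quantity tending to $1$ inside a $\limsup$. Everything upstream, namely the cover estimate and the termwise comparison, is elementary once the respective roles of the non-increasing hypothesis on $s$ and the nonnegativity of $\varphi$ have been identified; note in particular that neither temperedness nor any further F\o lner structure is needed for this proposition.
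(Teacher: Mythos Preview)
Your proof is correct and follows essentially the same route as the paper's own argument: both reduce to $q(\varphi,s)\le Q(\varphi,s)$ by covering $X$ with $d_{F_n}$-balls of radius $\e/2$ around an $(F_n,\e/2)$-spanning set, then invoke $\varphi\ge 0$ and the monotonicity of $s$ to pass from scale $s(\e)$ to $s(\e/2)$ before taking the outer $\limsup$ and using the scale-function axiom. Your write-up is in fact more carefully organized than the paper's, which presents the same chain of inequalities in a single displayed line and leaves the roles of the two hypotheses and the $s(\e/2)/s(\e)\to 1$ step entirely to the reader.
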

\begin{proof}
By Proposition \ref{p12} and Remark 1.1 (4), we are left to show that $Q(\varphi,s)\geq q(\varphi,s)$.
 For any $\e>0$ and $(F_n,\frac{\e}{2})$ spanning set $J$, then the $d_{F_n}$ balls centered at $J$ of radius $\frac{\e}{2}$ cover $X$ and these balls form an open cover $\alpha_J$ with mesh$(\alpha,d_{F_n})<\e$.

\begin{eqnarray*}
&&\limsup_{\e\to0}\frac{\limsup\limits_{n\to\infty}\frac{1}{|F_n|}\log\sum\limits_{y\in J}\exp (s(\frac{\e}{2})S_{F_n,\varphi}(y))}{s(\frac{\e}{2})}\\
&\geq& \limsup_{\e\to0}\frac{\limsup\limits_{n\to\infty}\frac{1}{|F_n|}\log\sum\limits_{A\in \alpha}\inf\limits_{x\in A}\exp (s(\e)S_{F_n,\varphi}(x))}{s(\frac{\e}{2})}.
\end{eqnarray*}
In the above inequality, we use the fact that $\varphi\in C(X,\mathbb{R}^+)$ and $s\in\mathcal{S}^*$.
 Therefore, $Q(\varphi,s)\geq  q(\varphi,s)$.


\end{proof}

\begin{dfn}
The map $$SP(\cdot,\cdot): C(X,\mathbb{R})\times \mathcal{S}\rightarrow \mathbb{R}\cup\{\infty\}, \varphi\mapsto SP(\varphi,s)=Q(\varphi,s)$$ is called \emph{scale pressure of} $G$ with respect to $s$ and $\varphi$.
\end{dfn}
For any invariant measure $\mu\in \mathcal{M}_G(X)$, $\varepsilon>0$, $0<\delta<1$, $s\in\mathcal{S}$\ we put

\begin{eqnarray*}
&&P_{\mu,\delta,\e,F_n}(\varphi,s):=\\
&&\inf\{\sum_{x\in E}\exp (s(\e)S_{F_n,\varphi}(x)):\ E \mbox{ is\ an}\ (F_n,\varepsilon)\mbox{\ spanning\ set\ of}\ D\subset X\ \mbox{with}\ \mu(D)\geq 1-\delta\};
\end{eqnarray*}
Let
$$ P_{\mu,\delta,\e}(\varphi,s):=
\limsup_{n\rightarrow\infty}\frac{1}{|F_n|}\log P_{\mu,\delta,\e,F_n}(\varphi,s),$$
and $$P_{\mu,\delta}(\varphi,s):=\limsup_{\e\to 0}\frac{P_{\mu,\delta,\e}(\varphi,s)}{s(\e)}.$$

For any finite measurable partition $\xi$ of $(X,\mathscr{B})$ and any $\mu\in \mathcal{M}_G(X)$, write
$h_{\mu}(G,\xi):=\lim\limits_{n\rightarrow\infty}-\frac{1}{|F_n|}\sum\limits_{A\in\xi_{F_n}}\mu(A)\log\mu(A)$,
where $\xi_{F_n}=\bigvee\limits_{g\in F_n}g^{-1}\xi$.
\begin{dfn}\label{entropy}Let $(X,G)$ be a $G$-action topological dynamical system, $\mu\in \mathcal{M}_G(X)$, we call
$$h_{\mu}(G):= \sup\{h_{\mu}(G,\xi):\ \xi\ \mbox{is\ a\ finite\ measurable\ partition\ of\ }(X,\mathscr{B})\}$$ the \emph {measure-theoretic entropy} of $G$ with respect to $\mu$.
\end{dfn}
For $\mu\in \mathcal{M}_G(X)$ define $N_{\mu}(F_n,\e,\delta)$ as the minimal number of dynamical balls $B_{F_n}(x,\e)$ needed to cover a set of measure strictly bigger than $1-\delta$. Then define $h_{\mu,\delta,\e}(X,G)=\limsup\limits_{n\to\infty} \frac{1}{|F_n|}\log N_{\mu}(F_n,\e,\delta)$. It was proven by Y. Zhao in \cite{Y} that $h_\mu(G)=\lim\limits_{\e\to 0} h_{\mu,\delta,\e}(X,G)$ for any $\mu\in \mathcal{E}_T(G)$, $\delta\in (0,1)$ and tempered F{\o}lner sequence $\{F_n\}_{n\in \mathbb{N}}$ in $G$ with $\lim\limits_{n\rightarrow\infty}\frac{|F_n|}{\log n}=\infty$.
\begin{prop}\label{pp2}
Let $\{F_n\}_{n\in \mathbb{N}}$ be a tempered F{\o}lner sequence in $G$ with $\lim\limits_{n\rightarrow\infty}\frac{|F_n|}{\log n}=\infty$, $s\in\mathcal{S}$ and $\e\in(0,1)$.  For $\varphi\in C(X,\mathbb{R})$, $\mu\in \mathcal{E}_G(X)$,
$$\lim_{\delta \rightarrow 0}P_{\mu,\delta}(\varphi,s)=\lim_{\delta \rightarrow 0}\limsup_{\e\to 0}\frac{h_{\mu,\delta,\e}(X,G)}{s(\e)}+\int \varphi d\mu.$$
\end{prop}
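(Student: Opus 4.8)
The plan is to prove that, up to the weight contributed by the potential, the quantity $P_{\mu,\delta,\e,F_n}(\varphi,s)$ behaves like $N_\mu(F_n,\e,\delta)$ multiplied by $\exp\big(s(\e)|F_n|\int\varphi\,d\mu\big)$; dividing by $s(\e)|F_n|$ and passing to the limits then gives the identity. Write $L=\int\varphi\,d\mu$ and $\mathrm{osc}(\varphi,\e)=\sup\{|\varphi(u)-\varphi(v)|:d(u,v)<\e\}$, which tends to $0$ as $\e\to0$ by uniform continuity of $\varphi$ on the compact space $X$. Two facts will drive the argument: (i) if $d_{F_n}(x,y)<\e$ then $|S_{F_n,\varphi}(x)-S_{F_n,\varphi}(y)|\le|F_n|\,\mathrm{osc}(\varphi,\e)$, exactly as in Remark~\ref{re1}(4); and (ii) since $\mu\in\mathcal{E}_G(X)$, the pointwise ergodic theorem for amenable groups along tempered F{\o}lner sequences gives $\frac{1}{|F_n|}S_{F_n,\varphi}(x)\to L$ for $\mu$-a.e.\ $x$ (ergodicity is what makes the limit the constant $L$). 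Fixing $\eta>0$ and applying Egorov's theorem to (ii), I obtain a set $A_\eta$ with $\mu(A_\eta)>1-\frac{\delta}{2}$ and an integer $N_0$ with $|S_{F_n,\varphi}(x)-|F_n|L|\le|F_n|\eta$ for all $x\in A_\eta$ and $n\ge N_0$.

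For the upper bound, fix $\delta,\e,\eta$ and take $N_\mu(F_n,\frac{\e}{2},\frac{\delta}{2})$ dynamical balls $B_{F_n}(x_i,\frac{\e}{2})$ covering a set $Y$ with $\mu(Y)>1-\frac{\delta}{2}$. Put $D=Y\cap A_\eta$, so $\mu(D)>1-\delta$, and for each index $i$ whose ball meets $D$ pick $y_i\in B_{F_n}(x_i,\frac{\e}{2})\cap D$. By the triangle inequality $E=\{y_i\}$ is an $(F_n,\e)$-spanning set of $D$ with $|E|\le N_\mu(F_n,\frac{\e}{2},\frac{\delta}{2})$, while $y_i\in A_\eta$ gives $S_{F_n,\varphi}(y_i)\le|F_n|(L+\eta)$ for $n\ge N_0$. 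Hence $P_{\mu,\delta,\e,F_n}(\varphi,s)\le N_\mu(F_n,\frac{\e}{2},\frac{\delta}{2})\exp\big(s(\e)|F_n|(L+\eta)\big)$. Taking $\frac{1}{|F_n|}\log$, then $\limsup_n$, dividing by $s(\e)$, and letting $\e\to0$ — using the scale-function axiom $\lim_{\e\to0}s(\e/2)/s(\e)=1$ to replace $\frac{\e}{2}$ by $\e$ — yields $P_{\mu,\delta}(\varphi,s)\le\limsup_{\e\to0}\frac{h_{\mu,\delta/2,\e}(X,G)}{s(\e)}+L+\eta$, after which I let $\eta\to0$ and $\delta\to0$.

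For the lower bound, fix any $D$ with $\mu(D)\ge1-\delta$ and any $(F_n,\e)$-spanning set $E$ of $D$. The balls $B_{F_n}(y,\e)$, $y\in E$, cover $D$, hence also $D'=D\cap A_\eta$, with $\mu(D')>1-\frac{3\delta}{2}$. Discarding the $y$ whose ball misses $D'$ leaves a subfamily $E'$ with $|E'|\ge N_\mu(F_n,\e,\frac{3\delta}{2})$, and each remaining $y$ lies within $d_{F_n}$-distance $\e$ of a point of $A_\eta$, so by (i) and the choice of $A_\eta$ one gets $S_{F_n,\varphi}(y)\ge|F_n|(L-\eta-\mathrm{osc}(\varphi,\e))$ for $n\ge N_0$. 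Summing over $E'$ and taking the infimum over $E$ gives $P_{\mu,\delta,\e,F_n}(\varphi,s)\ge N_\mu(F_n,\e,\frac{3\delta}{2})\exp\big(s(\e)|F_n|(L-\eta-\mathrm{osc}(\varphi,\e))\big)$; proceeding as above, with $\mathrm{osc}(\varphi,\e)\to0$, I obtain $P_{\mu,\delta}(\varphi,s)\ge\limsup_{\e\to0}\frac{h_{\mu,3\delta/2,\e}(X,G)}{s(\e)}+L-\eta$, and then $\eta\to0$, $\delta\to0$. Since $P_{\mu,\delta}$ and $h_{\mu,\delta,\e}$ are both monotone in $\delta$, the constants $\frac12$ and $\frac32$ wash out in the limit $\delta\to0$, and the two bounds combine into the asserted equality.

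The main obstacle is precisely the location of the spanning points: in the definition of $P_{\mu,\delta,\e,F_n}$ the infimum ranges over spanning sets $E$ whose points may sit anywhere in $X$, including where the Birkhoff sums $S_{F_n,\varphi}$ stray far from their typical value $|F_n|L$, so the ergodic theorem cannot be applied to the weights directly. The device that resolves this is to intersect the relevant measure-large set with the Egorov set $A_\eta$ and then invoke uniform continuity, fact (i), to transfer the ergodic control from a nearby typical point onto the spanning point itself; this is exactly what forces the radius to pass between $\e$ and $\frac{\e}{2}$ (handled by the scale-function axiom) and the measure threshold between $\delta$ and $\frac{\delta}{2}$ or $\frac{3\delta}{2}$ (absorbed in $\delta\to0$). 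I would also record at the outset that all four iterated limits exist, being monotone in $\delta$, so that the final equality is unambiguous.
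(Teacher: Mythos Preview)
Your argument is correct and follows the same strategy as the paper's proof: use the pointwise ergodic theorem together with Egorov's theorem to obtain a set of large measure on which the Birkhoff averages are uniformly close to $\int\varphi\,d\mu$, then sandwich $P_{\mu,\delta,\e,F_n}(\varphi,s)$ between multiples of $N_\mu(F_n,\e,c\delta)$ and pass to the iterated limits. The only technical difference is in the upper bound: you halve the radius and pick the spanning points \emph{inside} the Egorov set $A_\eta$ (recovering $\e$ from $\e/2$ via the scale axiom $s(\e/2)/s(\e)\to1$), whereas the paper keeps radius $\e$, takes a subset $J_{n_i}$ of a minimal spanning set, and transfers the Birkhoff control to those points through proximity to $B$; your variant makes the control on $S_{F_n,\varphi}(y_i)$ completely explicit.
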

\begin{proof}
The proof is further divided into two steps.

\emph{Step 1} First, we show that\begin{equation}\label{equ1}
\lim_{\delta \rightarrow 0}P_{\mu,\delta}(\varphi,s)\leq\lim_{\delta \rightarrow 0}\limsup_{\e\to 0}\frac{h_{\mu,\delta,\e}(X,G)}{s(\e)}+\int \varphi d\mu.
\end{equation}
For any $\tau>0$, there is $0<\e<\tau$ such that if $x,y\in X$, $d(x,y)<\e$, then $|\varphi(x)-\varphi(y)|<\tau.$
Since $\mu$ is ergodic, by Birkhoff ergodic theorem and Egorov theorem, there is a measurable set $B\in\mathscr{B}$ with $\mu(B)>1-\frac{\delta}{2}$ satisfies the following:
For every $i\in\mathbb{N}$ there exists $N_i\in\mathbb{N}$ such that $N_{i+1}>N_i$ and for any $n\geq N_i$, $x\in B$, $|\frac{1}{|F_n|}s(\e)S_{F_n,\varphi}(x)-\int s(\e)\varphi d\mu|<\frac{1}{i}.$

We choose a sequence of positive integers $\{n_i\}_{i\in\mathbb{N}}$ satisfying:
\begin{item}
\item (1) $n_i\geq N_i$, for any $i\in\mathbb{N}$;
\item (2) $P_{\mu,\delta,\e}(\varphi,s)=\lim\limits_{i\rightarrow\infty}\frac{1}{|F_{n_i}|}\log P_{\mu,\delta,\e,F_{n_i}}(\varphi,s)$.
\end{item}

Let $C_{n_i}$ be a set with $\mu(C_{n_i})>1-\frac{\delta}{2}$ and
$D_{n_i}$ be an $(F_{n_i},\e)$ spanning set of $C_{n_i}$ with card$(D_{n_i})=N_\mu(F_{n_i},\e,\frac{\delta}{2})$.

Put $E_{n_i}=B\cap C_{n_i}$, then $\mu(E_{n_i})>1-\delta.$ Let $J_{n_i}\subset D_{n_i}$ be an $(F_{n_i},\e)$ spanning set of $E_{n_i}$ with smallest cardinality.
Therefore,
\begin{eqnarray*}
&&\sum_{x\in J_{n_i}}\exp (s(\e)S_{F_{n_i},\varphi}(x))\\
&\leq&\sum_{x\in J_{n_i}}\exp [s(\e)|F_{n_i}|(\int \varphi d\mu+\frac{1}{i})]\\
&\leq&card (D_{n_i})\exp [s(\e)|F_{n_i}|(\int \varphi d\mu+\frac{1}{i})].
\end{eqnarray*}
Therefore $$ P_{\mu,\delta,\e,F_{n_i}}(\varphi,s)\leq card (D_{n_i})\exp[|F_{n_i}|(s(\e)\int \varphi d\mu+s(\e)\frac{1}{i})].$$
Hence, $$P_{\mu,\delta,\e}(\varphi,s)\leq h_{\mu,\frac{\delta}{2},\e}(X,G)+s(\e)\int \varphi d\mu.$$

Dividing by $s(\e)$ and letting $\e\rightarrow 0$, we have (\ref{equ1}).

\emph{Step 2} Next, we show that \begin{equation}\label{equ2}
\lim_{\delta \rightarrow 0}P_{\mu,\delta}(\varphi,s)\geq\lim_{\delta \rightarrow 0}\limsup_{\e\to 0}\frac{h_{\mu,\delta,\e}(X,G)}{s(\e)}+\int \varphi d\mu.
\end{equation}
By Birkhoff ergodic theorem and Egorov theorem, similarly to \emph{Step 1} we can find a measurable set $B\in\mathscr{B}$ with $\mu(B)>1-\delta$ and a sequence of positive integers $\{n_i\}_{i\in\mathbb{N}}$ satisfying the following:

For every $i\in\mathbb{N}$ there exists $N_i\in\mathbb{N}$ such that for any $i\in \mathbb{N}$, $x\in B$, $|\frac{1}{|F_{n_i}|}s(\e)S_{F_{n_i},\varphi}(x)-\int s(\e)\varphi d\mu|<\frac{1}{i},$ and
$$h_{\mu,2\delta,\e}(X,G)=\lim\limits_{i\rightarrow\infty}\frac{1}{|F_{n_i}|}\log N_{\mu,2\delta,\e,F_{n_i}}(X,G).$$
Let $C_{n_i}$ be a set with $\mu(C_{n_i})>1-\delta$ and
$D_{n_i}$ be an $(F_{n_i},\e)$ spanning set of $C_{n_i}$. Put $E_{n_i}=B\cap C_{n_i}$, then $\mu(E_{n_i})>1-2\delta.$ Let $J_{n_i}\subset D_{n_i}$ be an $(F_{n_i},\e)$ spanning set of $E_{n_i}$ with smallest cardinality.
Therefore,
\begin{eqnarray*}
&&\sum_{x\in D_{n_i}}\exp (s(\e)S_{F_{n_i},\varphi}(x))\\
&\geq&\sum_{x\in J_{n_i}}\exp (s(\e)S_{F_{n_i},\varphi}(x))\\
&\geq&\sum_{x\in J_{n_i}}\exp [s(\e)|F_{n_i}|(\int \varphi d\mu-\frac{1}{i})]\\
&=&card (J_{n_i})\exp [s(\e)|F_{n_i}|(\int \varphi d\mu-\frac{1}{i})]\\
&\geq&N_\mu(F_{n_i},\e,2\delta)\exp [s(\e)|F_{n_i}|(\int \varphi d\mu-\frac{1}{i})].
\end{eqnarray*}
Hence,
\begin{eqnarray*}
&&P_{\mu,\delta,\e}(\varphi,s)\\
&\geq&h_{\mu,2\delta,\e}(X,G)+s(\e)\int \varphi d\mu.
\end{eqnarray*}

Dividing by $s(\e)$ and letting $\e\rightarrow 0$, we have (\ref{equ2}).

\end{proof}
\begin{dfn}
For any $0<\delta<1$,  $\mu\in \mathcal{M}_G(X)$, $s\in\mathcal{S}$, the map $$SP_{\mu}(\cdot,\cdot): C(X,\mathbb{R})\times\mathcal{S}\rightarrow \mathbb{R}\cup\{\infty\}, \varphi\mapsto SP_{\mu,\delta}(\varphi,s):=P_{\mu,\delta}(\varphi,s)$$ is called \emph{measure-theoretic scale pressure of} $G$ with respect to $s$ and $\varphi$.
\end{dfn}
One of our main results is the following variational principle, we need a condition.
For any finite measurable partition $\xi$ and $r>0$, let $U_{r}(A):=\{x\in A:\exists y\in A^c,\ \mbox{with}\ d(x,y)<r\}$ and $U_{r}(\xi):=\bigcup\limits_{A\in\xi}U_{r}(A)$. Since $\bigcap\limits_{r>0}U_{r}(\xi)=\partial\xi$, $\lim\limits_{r\rightarrow 0}\mu(U_{r}(\xi))=\mu(\partial\xi)$, for any $\mu\in\mathcal{M}_G(X)$, where $\partial\xi:=\bigcup\limits_{A\in\xi}\partial A$ and $\partial A$ is the boundary of $A$.

If a finite measurable partition $\xi$ satisfies $\mu(\partial\xi)=0$ for some $\mu\in\mathcal{E}_G(X)$, then for any $\gamma>0$, we can find $0<r<\gamma$ such that $\mu(U_{r}(\xi))<\gamma.$ Let $r_{\mu,\gamma}:=\sup\{r\in \mathbb{R}^+: \exists\,\mbox{finite\,measurable\,partition}\,\xi
\,\mbox{with}\,\mu(\partial\xi)=0,diam(\xi)<\gamma\,\mbox{and}\,\mu(U_{r}(\xi))<\gamma\}$
and $r_{\gamma}:=\inf\limits_{\mu\in\mathcal{E}_G(X)}r_{\mu,\gamma}$.

\begin{con2}For any $\gamma>0$, $r_{\gamma}>0$ and
$\limsup\limits_{\gamma\rightarrow0}\frac{s(r_{\gamma})}{s(\gamma)}=1.$
\end{con2}
\begin{rem}
If we choose $s\equiv 1$, then Condition A holds for the system such that for any $\gamma>0$, $r_{\gamma}>0$. If we take $s(x)=-\log x$, for example, a trivial example satisfying Condition A is one dimensional uniquely ergodic systems whose ergodic measure is Lebesgue measure.
\end{rem}
For any $\e\in(0,1)$, $s\in\mathcal{S}^{**}$, $\varphi\in C(X,\mathbb{R})$ and $0<\delta<1$, let $$sp(\varphi,s,\delta):=\limsup\limits_{\e\to 0}\frac{\sup\limits_{\mu\in \mathcal{M}_G(X)}P_{\mu,\delta,\e}(\varphi,s)}{s(\e)}.$$
\begin{the1}\label{A} Let $(X,G)$ be a $G$-action topological dynamical system satisfying Condition A. $\{F_n\}_{n\in\mathbb{N}}$ is a tempered F{\o}lner sequence in $G$ with $\lim\limits_{n\rightarrow\infty}\frac{|F_n|}{\log n}=\infty$. For any $S\in\mathcal{S}^{**}$, $\varphi\in C(X,\mathbb{R})$ and $0<\delta<1$,
 $$SP(\varphi,s)=sp(\varphi,s,\delta).$$
\end{the1}
\begin{rem}
The condition $\lim\limits_{n\rightarrow\infty}\frac{|F_n|}{\log n}=\infty$ on the tempered F{\o}lner sequence $\{F_n\}_{n\in\mathbb{N}}$ is only to guarantee the application of Shannon-McMillan-Breiman theorem (see for example \cite{L2}). Therefore, neither the quantity $P_{\mu,\delta,\e}(\varphi)$ nor the result depends on the choice of F{\o}lner sequence.
\end{rem}
To prove theorem A, we need the following lemma.
\begin{lem}\label{bianfen}
For any $\gamma>0,$ there exists $\mu_{\gamma}\in \mathcal{M}_G(X)$ such that for all finite measurable partition $\xi$ with diam$(\xi)<\gamma$ and $\mu_{\gamma}(\partial\xi)=0$, we have $P_{\gamma}(\varphi,s)\leq h_{\mu_{\gamma}}(G,\xi)+S(\gamma)\int \varphi d\mu_{\gamma}$. Moreover,  $\mu_{\gamma}$ can be chosen to be ergodic.
\end{lem}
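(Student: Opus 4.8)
The plan is to run a Misiurewicz-type variational argument adapted to the amenable setting, but at the \emph{fixed} scale $\e=\gamma$. Writing $\psi:=s(\gamma)\varphi$, observe that $s(\gamma)S_{F_n,\varphi}(x)=S_{F_n,\psi}(x)$, so $P_{\gamma,F_n}(\varphi,s)=\sup_{E}\sum_{x\in E}\exp\big(S_{F_n,\psi}(x)\big)$ over $(F_n,\gamma)$-separated sets $E$; thus $P_\gamma(\varphi,s)$ is a fixed-scale partition-function pressure for the potential $\psi$. First I would, for each $n$, pick a near-optimal $(F_n,\gamma)$-separated set $E_n$ and form the weighted atomic measure $\sigma_n:=\big(\sum_{y\in E_n}e^{S_{F_n,\psi}(y)}\big)^{-1}\sum_{x\in E_n}e^{S_{F_n,\psi}(x)}\delta_x$, together with its F{\o}lner average $\mu_n:=\frac{1}{|F_n|}\sum_{g\in F_n}g_*\sigma_n$. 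Passing to a subsequence, I would arrange simultaneously that $\frac1{|F_n|}\log P_{\gamma,F_n}(\varphi,s)\to P_\gamma(\varphi,s)$ and $\mu_n\to\mu_\gamma$ in the weak-$*$ topology; the asymptotic invariance of $\{F_n\}$ forces $\mu_\gamma\in\mathcal M_G(X)$. Crucially, this $\mu_\gamma$ is built independently of $\xi$, as the statement demands.

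The arithmetic core is the standard entropy identity. If $\mathrm{diam}(\xi)<\gamma$, then every atom of $\xi_{F_n}=\bigvee_{g\in F_n}g^{-1}\xi$ has $d_{F_n}$-diameter $<\gamma$, hence contains at most one point of the $(F_n,\gamma)$-separated set $E_n$; a direct computation with $-t\log t$ then gives
$$\log\Big(\sum_{x\in E_n}e^{S_{F_n,\psi}(x)}\Big)=H_{\sigma_n}(\xi_{F_n})+\int S_{F_n,\psi}\,d\sigma_n=H_{\sigma_n}(\xi_{F_n})+|F_n|\,s(\gamma)\!\int\varphi\,d\mu_n.$$
Dividing by $|F_n|$ and using $\int\varphi\,d\mu_n\to\int\varphi\,d\mu_\gamma$ (continuity of $\varphi$), the lemma reduces to proving the one-sided entropy estimate $\limsup_n\frac1{|F_n|}H_{\sigma_n}(\xi_{F_n})\le h_{\mu_\gamma}(G,\xi)$ for every valid $\xi$.

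Establishing this last inequality is the main obstacle, and is where amenability enters. In the $\mathbb Z$-case one tiles $\{0,\dots,n-1\}$ by translates of a fixed block $K$, applies subadditivity of $H$ over the tiles and concavity of $t\mapsto-t\log t$ to bound $\frac1{|F_n|}H_{\sigma_n}(\xi_{F_n})$ by $\frac1{|K|}H_{\mu_n}(\xi_K)$ up to a vanishing error, then lets $n\to\infty$ and $K$ exhaust the group. For a general countable amenable $G$ I would replace the exact tiling by an Ornstein--Weiss quasi-tiling of $F_n$ by translates of finitely many sufficiently invariant F{\o}lner shapes, use subadditivity of $H_{\sigma_n}$ over the almost disjoint tiles together with concavity to pass from $H_{\sigma_n}(\xi_{F_n})$ to F{\o}lner averages of $H_{g_*\sigma_n}(\xi_K)$, and then invoke the hypothesis $\mu_\gamma(\partial\xi)=0$: this makes $\xi$ (and each fixed $\xi_K$) a $\mu_\gamma$-continuity partition, so $H_{\mu_n}(\xi_K)\to H_{\mu_\gamma}(\xi_K)$ along the subsequence. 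Letting the shapes become more invariant yields the bound by $h_{\mu_\gamma}(G,\xi)$. Combining with the displayed identity gives $P_\gamma(\varphi,s)\le h_{\mu_\gamma}(G,\xi)+s(\gamma)\int\varphi\,d\mu_\gamma$ simultaneously for all admissible $\xi$.

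For the ergodicity clause I would invoke the ergodic decomposition $\mu_\gamma=\int_{\mathcal E_G(X)}m\,d\tau(m)$ together with the affinity of $m\mapsto h_m(G,\xi)+s(\gamma)\int\varphi\,dm$ on $\mathcal M_G(X)$; the entropy map is affine for amenable actions because the concavity defect of $H_\mu(\xi_{F_n})$ is bounded by $\log 2$, which disappears after dividing by $|F_n|\to\infty$. For a fixed admissible $\xi$ one has $\mu_\gamma(\partial\xi)=\int m(\partial\xi)\,d\tau(m)=0$, so $m(\partial\xi)=0$ for $\tau$-a.e.\ $m$, and since this affine functional integrates to $h_{\mu_\gamma}(G,\xi)+s(\gamma)\int\varphi\,d\mu_\gamma\ge P_\gamma(\varphi,s)$, some ergodic component inherits the same lower bound. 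Selecting such a component (care being needed to retain the inequality across the family of admissible partitions, which the affine and continuity-set structure permits) shows that $\mu_\gamma$ may be taken ergodic.
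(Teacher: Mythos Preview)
Your overall strategy matches the paper's proof almost step for step: choose near-optimal $(F_n,\gamma)$-separated sets $E_n$, form the Gibbs-weighted atomic measures $\sigma_n$ and their F{\o}lner averages $\mu_n$, pass to a weak-$*$ limit $\mu_\gamma$ along a subsequence realizing $P_\gamma(\varphi,s)$, and use that $\mathrm{diam}(\xi)<\gamma$ forces each atom of $\xi_{F_n}$ to carry at most one point of $E_n$, giving the key identity $\log\sum_{x\in E_n}e^{S_{F_n,\psi}(x)}=H_{\sigma_n}(\xi_{F_n})+|F_n|s(\gamma)\int\varphi\,d\mu_n$. The paper also closes the ergodicity clause by a one-line appeal to the ergodic decomposition, just as you do.

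The only substantive divergence is in how you bound $\frac{1}{|F_n|}H_{\sigma_n}(\xi_{F_n})$ from above by $h_{\mu_\gamma}(G,\xi)$. You propose Ornstein--Weiss quasi-tiling of $F_n$ by translates of auxiliary F{\o}lner shapes $K$, followed by subadditivity and concavity of $H$. The paper instead invokes a ready-made subadditivity inequality (\cite[Lemma~3.1(3)]{W}, a Shearer-type bound): for every finite $F\subset G$,
\[
H_{\sigma_n}(\xi_{F_n})\le \frac{1}{|F|}\sum_{g\in F_n}H_{\sigma_n\circ g^{-1}}(\xi_F)+\frac{|F^{-1}F_n\setminus F_n|}{|F_n|}\log\mathrm{card}(\xi),
\]
then applies concavity to replace the average by $H_{\mu_n}(\xi_F)$, passes to the limit using $\mu_\gamma(\partial\xi_F)=0$, and finally appeals to the infimum rule $h_{\mu_\gamma}(G,\xi)=\inf_F\frac{1}{|F|}H_{\mu_\gamma}(\xi_F)$ from \cite{Td}. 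Both routes are correct; the paper's is shorter because the combinatorics is packaged into the cited lemma, whereas your quasi-tiling argument rederives that combinatorics by hand. Your caveat about uniformity in $\xi$ for the ergodic component is well taken; the paper is no more detailed on this point than you are.
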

\begin{proof}

The proof follows the same line of \cite[Theorem 1.2]{D}, with slight modification. For any $\gamma>0$, any tempered F{\o}lner sequence $\{F_n\}_{n\in\mathbb{N}}$ with $\lim\limits_{n\to \infty}\dfrac{|F_n|}{\log n}=\infty$, let $E_{n}=\{x_1,...,x_{sep_{\gamma,F_n}(X,G)}\}$ be a maximal collection of $(F_n,\gamma)$-separated points in $X$ with $$\log\sum\limits_{x\in E_{n}}\exp (s(\gamma)S_{F_n,\varphi}(x))\geq \log P_{\gamma,F_n}(\varphi,s)-1.$$ Define $$\sigma_n=\dfrac{\sum\limits_{y\in E_{n}}\exp (s(\gamma)S_{F_n,\varphi}(y))\delta_y}{\sum\limits_{x\in E_{n}}\exp (s(\gamma)S_{F_n,\varphi}(x))},$$
where $\delta_x$ is the probability measure supported at $x$. Then define
$$\mu_n=\dfrac{1}{|F_n|}\sum_{g\in {F_n}} \sigma_n\circ g^{-1}.$$ 
Consider a subsequence $\{n_k\}_{k\in \mathbb{N}}$ such that $$P_{\gamma}(\varphi,s)=\lim_{k\to \infty}\dfrac{1}{|F_{n_k}|}\log P_{\gamma,n_k}(\varphi,s)$$ and $\{\mu_{n_k}\}_{k\in \mathbb{N}}$ converges to some $\mu_{\gamma}\in \mathcal{M}_G(X)$.
Let $\xi$ be a finite measurable partition with diam$(\xi)<\gamma$ and $\mu(\partial\xi)=0$. Recall that $\xi_{F_n}=\bigvee\limits_{g\in {F_n}}g^{-1}\xi$.
Since each element of $\xi_{F_n}$ contains at most one element of $E_n$, by the definition of $\sigma_n$, we have
\begin{eqnarray*}
&&H_{\sigma_n}(\xi_{F_n})+s(\gamma)\int S_{F_n,\varphi}(x) d\sigma_n\\
&=&\sum_{y\in E_{n}}\sigma_n(\{y\})[s(\gamma)S_{F_n,\varphi}(y)-\log\sigma_n(\{y\})]\\
&=&\log\sum_{y\in E_{n}}\exp (s(\gamma)S_{F_n,\varphi}(y)),
\end{eqnarray*}
where $H_{\sigma_n}(\xi_{F_n}):=\sum\limits_{A\in\xi_{F_n}}-\sigma_n(A)\log\sigma_n(A)$.
By a similar argument of \cite[Lemma 3.1(3)]{W}, for any finite subset $F$ of $G$, we have
$$H_{\sigma_n}(\xi_{F_n})\leq \frac{1}{|F|}\sum_{g\in F_n} H_{\sigma_n\circ g^{-1}}(\xi_F)+\frac{1}{|F_n|}|F^{-1}F_n\backslash F_n|\log \mbox{card}(\xi),$$
where $\mbox{card}(\xi)$ denotes the number of cells of partition $\xi$. Hence,
\begin{eqnarray*}
&&\frac{1}{|F_n|}H_{\sigma_n}(\xi_{F_n})\\
&\leq&\frac{1}{|F|}\frac{1}{|F_n|}\sum_{g\in F_n}H_{\sigma_n\circ g^{-1}}(\xi_F)+\frac{|F^{-1}F_n\backslash F_n|}{|F_n|}\log\mbox{card}(\xi)\\
&\leq&\frac{1}{|F|}H_{\mu_n}(\xi_F)+\frac{|F^{-1}F_n\backslash F_n|}{|F_n|}\log\mbox{card}(\xi).
\end{eqnarray*}
Since $\mu(\partial\xi_F)=0$, we have
\begin{eqnarray*}
&&\limsup_{n\rightarrow\infty}\frac{1}{|F_n|}\log\sum_{y\in E_{n}}\exp (s(\gamma)S_{F_n,\varphi}(y))\\
&=&\lim_{j\rightarrow\infty}\frac{1}{|F_{n_j}|}\log\sum_{y\in E_{n_j}}\exp (s(\gamma)S_{F_{n_j},\varphi}(y))\\
&=&\lim_{j\rightarrow\infty}\frac{1}{|F_{n_j}|}(H_{\sigma_{n_j}}(\xi_{F_{n_j}})+s(\gamma)\int S_{F_{n_j},\varphi}(x) d\sigma_{n_j})\\
&\leq&\lim_{j\rightarrow\infty}(\frac{1}{|F|}H_{\mu_{n_j}}(\xi_F)+\frac{|F^{-1}F_{n_j}\backslash F_{n_j}|}{|F_{n_j}|}\log\mbox{card}(\xi)+\lim_{j\rightarrow\infty}s(\gamma)\int\varphi d\mu_{n_j}\\
&=&\frac{1}{|F|}H_{\mu_\gamma}(\xi_F)+s(\gamma)\int\varphi d\mu_{\gamma}.
\end{eqnarray*}
Taking infimum over all finite subsets of $G$, by \cite[Theorem 4.2]{Td}, $$\limsup\limits_{n\rightarrow\infty}\frac{1}{|F_n|}\log\sum\limits_{y\in E_{n}}\exp (s(\gamma)S_{F_n,\varphi}(y))\leq h_{\mu_\gamma}(G,\xi)+s(\gamma)\int\varphi d\mu_\gamma.$$
We can get that $P_{\gamma}(\varphi,s)\leq h_{\mu_{\gamma}}(G,\xi)+s(\gamma)\int \varphi d \mu_{\gamma}.$

By ergodic decomposition theorem \cite{W}, we can choose $\mu_{\gamma}\in \mathcal{E}_G(X)$ satisfying the desired properties.
\end{proof}
\begin{proof}[proof of theorem A]
The inequality $SP(\varphi,s)\geq sp(\varphi,s,\delta)$ follows directly from definitions.

For any $\tau>0$, there is $0<\e<\min\{\tau,\frac{1-\delta}{2}\}$ such that if $x,y\in X$, $d(x,y)<\e$, then $|\varphi(x)-\varphi(y)|<\tau.$

For any finite measurable partition $\xi$ with $\mu(\partial\xi)=0$ for some $\mu\in\mathcal{E}_G(X)$, we can find $0<r_{\mu}<\e$ such that $\mu(U_{r}(\xi))<\e.$ Suppose that $N_\mu(F_n,r,\delta)$ is the minimal number of $B_{F_n}(x,r)$ balls that cover a set of measure more than $1-\delta$. According to the proof of \cite[Theorem 3.1]{D}, by a combinational arguments on the numbers of the cells $\xi_{F_n}$ and the bowen balls $B_{F_n}(x,r)$, there is $N_1\in\mathbb{N}$ such that for any $n>N_1$, $$N_\mu(F_n,r,\delta)\geq\frac{\exp[|F_n|(h_{\mu}(G,\xi)-\e)]}{D(\frac{\e}{2},N_0,F_n)}
\frac{1-\delta}{4},$$ where $D(\frac{\e}{2},N_0,F_n):=\sum\limits_{m=0}^{[|F_n|\frac{\e}{2}]}(N_0-1)^mC_{|F_n|}^m,$ and $N_0=card (\xi)$. By Stirling formula, $$\lim\limits_{n\rightarrow\infty}\frac{1}{|F_n|}\log D(\frac{\e}{2},N_0,F_n)=\frac{\e}{2}\log(N_0-1)-\frac{\e}{2}\log\frac{\e}{2}-(1-\frac{\e}{2})\log(1-\frac{\e}{2}).$$

Again by Birkhoff ergodic theorem and Egorov theorem, similarly to the proof in \emph{Step 1} of Proposition 1.2, we can find a measurable set $B\in\mathscr{B}$ with $\mu(B)>1-\frac{\delta}{2}$ and a sequence of positive integers $\{n_i\}_{i\in\mathbb{N}}$ satisfies the following:

For every $i\in\mathbb{N}$ there exists $N_i\in\mathbb{N}$ such that for any $i\in \mathbb{N}$, $x\in B$, $|\frac{1}{|F_{n_i}|}s(r)S_{F_{n_i},\varphi}(x)-\int s(r)\varphi d\mu|<\frac{1}{i},$ and
$$P_{\mu,\delta,r}(\varphi,s)=\lim\limits_{i\rightarrow\infty}\frac{1}{|F_{n_i}|}\log P_{\mu,\delta,r,F_{n_i}}(\varphi,s).$$
Let $C_{n_i}$ be a set with $\mu(C_{n_i})>1-\frac{\delta}{2}$ and
$D_{n_i}$ be an $(F_{n_i},r)$ spanning set of $C_{n_i}$. Put $E_{n_i}=B\cap C_{n_i}$, then $\mu(E_{n_i})>1-\delta.$ Let $J_{n_i}\subset D_{n_i}$ be an $(F_{n_i},r)$ spanning set of $E_{n_i}$ with smallest cardinality. Thus for any $x\in J_{n_i}$, there is $\pi(x)\in E_{n_i}$ such that $d_{F_{n_i}}(x,\pi(x))<r$.
Therefore,
\begin{eqnarray*}
&&\sum_{x\in D_{n_i}}\exp (s(\gamma)S_{F_{n_i},\varphi}(x))\\
&\geq&\sum_{x\in J_{n_i}}\exp (s(\gamma)S_{F_{n_i},\varphi}(x))\\
&\geq&\sum_{x\in J_{n_i}}\exp [s(\gamma)|F_{n_i}|(\int \varphi d\mu-\frac{1}{i})]\\
&=&card (J_{n_i})\exp [s(\gamma)|F_{n_i}|(\int \varphi d\mu-\frac{1}{i})]\\
&\geq&N_\mu(F_{n_i},\gamma,\delta)\exp [s(\gamma)|F_{n_i}|(\int \varphi d\mu-\frac{1}{i})]\\
&\geq&
\frac{1-\delta}{4D(\frac{\e}{2},N_0,F_{n_i})}\exp[|F_{n_i}|(h_{\mu}(G,\xi)-\e)]\exp [s(\gamma)|F_{n_i}|(\int \varphi d\mu-\frac{1}{i})]\\
&=&\frac{1-\delta}{4D(\frac{\e}{2},N_0,F_{n_i})}\exp[|F_{n_i}|(h_{\mu}(G,\xi)-\e
+s(\gamma)\int \varphi d\mu-s(\gamma)\frac{1}{i})].
\end{eqnarray*}

Hence, for any $\delta>0$, $\mu\in\mathcal{E}_{G}(X)$ and any finite partition $\xi$ with $\mu(\partial\xi)=0$, we can find $0<r<\e$ such that
\begin{eqnarray*}
&&P_{\mu,\delta,r}(\varphi,s)\\
&\geq&h_{\mu}(G,\xi)+s(r)\int \varphi d\mu+s(r)\tau-\e-[\frac{\e}{2}\log(N_0-1)-\frac{\e}{2}\log\frac{\e}{2}-(1-\frac{\e}{2})\log(1-\frac{\e}{2})].
\end{eqnarray*}

For any $0<\gamma<\e$, by Lemma \ref{bianfen}, we have
\begin{eqnarray*}
&&P_{\gamma}(\varphi,s)\\
&\leq& h_{\mu_{\gamma}}(G,\xi)+s(\gamma)\int \varphi d\mu_{\gamma}\\
&\leq&P_{\mu_{\gamma},\delta,r_{\mu_{\gamma}}(\varphi,s)}+s(r_{\mu_{\gamma}})\tau+\e+[\frac{\e}{2}\log(N_0-1)+\frac{\e}{2}\log\frac{\e}{2}+(1-\frac{\e}{2})\log(1-\frac{\e}{2})]\\
&\leq&\sup_{\mu \in \mathcal{M}_G(X)} P_{\mu,\delta,r_{\mu_{\gamma}}(\varphi,s)}+s(r_{\mu_{\gamma}})\tau+\e+[\frac{\e}{2}\log(N_0-1)+\frac{\e}{2}\log\frac{\e}{2}+(1-\frac{\e}{2})\log(1-\frac{\e}{2})]
\end{eqnarray*}

Dividing by $s(\gamma)$ and letting $\tau\rightarrow 0$. Since $\tau>\e>\gamma>0$, by Condition A, we have $$SP(\varphi,s)\leq sp(\varphi,s).$$
\end{proof}

At last, we pose an open problem, namely, whether we can exchange the order of the limit and supremum in Theorem A.

\begin{open}
Let $(X,G)$ be a $G$-action topological dynamical system, $\varphi\in C(X,\mathbb{R})$, and $0<\delta<1$. Whether
 $$SP(\varphi,s)=\sup_{\mu\in\mathcal{M}_G(X)}SP_{\mu,\delta}(\varphi,s).$$
\end{open}

\section{Pseudo Orbits}  

 Pseudo orbits have proved to be a powerful conceptual tool in dynamical systems. For example, if group action is expansive and has the pseudo orbit tracing property then it is topologically stable \cite{ch};  every expansive action of an amenable group with positive entropy that has the pseudo orbit tracing property must admit off-diagonal asymptotic pairs \cite{tom}. In this section, we investigate scale pressure with respect to pseudo orbits.

In this section, we assume that $G$ is a finitely generated countable discrete amenable group with a \emph{finite} generating set $\mathfrak{G}=\{\mathbf{g}_1,\ldots,\mathbf{g}_m\}$, $m\in\mathbb{N}$.
\begin{dfn}
For $\epsilon>0$ we say that a sequence $\{y_g\}_{g\in G}$ is an \emph{$\epsilon$-pseudo orbit} of an action $G$ (with respect to the generating set $\mathfrak{G}$) if
$d(y_{tg}, t(y_g))<\epsilon, \ \forall t\in\mathfrak{G}, g\in G$.
\end{dfn}
Let $PO_{\epsilon}(\mathfrak{G})$ denote all $\epsilon$ pseudo orbits for $G$, let $X^{G}=\prod\limits_{g\in G}X_g$ with $X_g=X$, and denote the point $\overline{x}$ in $X^{G}$ by $(x_g)_{g\in G}.$
\begin{dfn}
Let $\e>0$, $E,J,K\subseteq X^{G}$ and $F\in F(G)$. If for any $\overline{x}\in K$ there exists $\overline{y}\in J$ such that $d(x_g,y_g)\leq \e$, for all $g\in F$, then we call $J$ is an\emph{ $(F,\e)$ spanning set of $K$}.\ If for any $\overline{x},\overline{y}\in E$ there is a $g\in F$ such that $d(x_g,y_g)>\varepsilon$,\ then we call $E$ is an\emph{ $(F,\e)$ separated set}.
\end{dfn}
Denote by $sep_{\e,F}(X^{G}, K)$ the maximal cardinality of an $(F,\e)$-separated subset of $K\subseteq X^{G}$, by $spa_{\e,F}(X^{G},K)$ the smallest cardinality of an $(F,\e)$-spanning subset of $K\subseteq X^{G}$.
Suppose that $Y\subset PO_\epsilon(\mathfrak{G})$ is a nonempty subset, $\varphi\in C(X,\mathbb{R})$.
For $\varepsilon\in(0,1)$, $s\in\mathcal{S}$ and the F{\o}lner sequence $\{F_n\}_{n\in\mathbb{N}}$, we put
\begin{eqnarray*}
&&POQ_{\e,F_n}(Y,G,\varphi,s):=\\
&&\inf\{\sum_{(x_g)_{g\in G}\in J}\exp (s(\e)\sum_{g\in F_n}\varphi(x_g)):\ J\ is\ an \ (F_n,\varepsilon)\ spanning\ set\ of\ Y\};
\end{eqnarray*}
\begin{eqnarray*}
&&POP_{\e,F_n}(Y,G,\varphi,s):=\\
&&\sup\{\sum_{(x_g)_{g\in G}\in E}\exp (s(\e)\sum_{g\in F_n}\varphi(x_g)):\ E\ is\ an \ (F_n,\varepsilon)\ separated\ set\ of\ Y\}.
\end{eqnarray*}

For $\varphi\in C(X,\mathbb{R})$ $\e\in(0,1)$, $s\in \mathcal{S}$ and the F{\o}lner sequence $\{F_n\}_{n\in\mathbb{N}}$ in $G$, let
$$POP(\varphi,s):=\limsup_{\e\rightarrow0}\frac{\limsup\limits_{n\rightarrow\infty}\frac{1}{|F_n|}\log(\lim\limits_{\epsilon\rightarrow0}POP_{\e,F_n}(PO_\epsilon(\mathfrak{G}),G,\varphi,s))}
{s(\e)}; $$
$$POQ(\varphi,s):=\limsup_{\e\rightarrow0}\frac{\limsup\limits_{n\rightarrow\infty}\frac{1}{|F_n|}\log(\lim\limits_{\epsilon\rightarrow0}POQ_{\e,F_n}(PO_\epsilon(\mathfrak{G}),G,\varphi,s))}
{s(\e)}. $$

\begin{prop}\label{ppp3}For $\varphi\in C(X,\mathbb{R})$ with $\varphi\geq0$, and for $\e\in(0,1)$, $s\in\mathcal{S}^*$,
$POP(\varphi,s)=POQ(\varphi,s)$.
\end{prop}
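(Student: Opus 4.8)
The plan is to follow the template of Proposition \ref{p1}, but to compare the spanning and separated quantities directly, since no cover-based analogues of $p_{\e,F_n}$ and $q_{\e,F_n}$ are introduced in the pseudo-orbit setting. One inequality is immediate: a \emph{maximal} $(F_n,\e)$-separated subset $E$ of $Y$ is automatically an $(F_n,\e)$-spanning set of $Y$, because any $\overline{x}\in Y$ that could not be $\e$-tracked in every coordinate $g\in F_n$ by a point of $E$ could be adjoined to $E$ without destroying separation, contradicting maximality. Since $E$ is spanning we get $POQ_{\e,F_n}(Y,G,\varphi,s)\le\sum_{\overline{x}\in E}\exp(s(\e)\sum_{g\in F_n}\varphi(x_g))$, and since $E$ is separated the right-hand side is $\le POP_{\e,F_n}(Y,G,\varphi,s)$. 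Applying this with $Y=PO_\epsilon(\mathfrak{G})$ and passing through $\lim_{\epsilon\to0}$, $\frac{1}{|F_n|}\log$, $\limsup_n$, division by $s(\e)$ and $\limsup_{\e\to0}$ yields $POQ(\varphi,s)\le POP(\varphi,s)$, using no hypotheses on $\varphi$ or $s$.

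For the reverse inequality I would exploit the standard injection from separated sets into spanning sets at half the scale. Fix $\tau>0$ and, using uniform continuity of $\varphi$ on the compact space $X$, choose $\e_0>0$ so that $d(u,v)<\e_0$ forces $|\varphi(u)-\varphi(v)|<\tau$. For $\e<\e_0$, let $E$ be any $(F_n,\e)$-separated subset and $J$ any $(F_n,\e/2)$-spanning subset of $Y$. Choosing for each $\overline{x}\in E$ a point $\pi(\overline{x})\in J$ with $d(x_g,(\pi\overline{x})_g)\le\e/2$ for all $g\in F_n$ defines an \emph{injection} $\pi\colon E\to J$: two points of $E$ with a common image would be $\e$-close in every coordinate $g\in F_n$, contradicting separation. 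Because $d(x_g,(\pi\overline x)_g)<\e_0$ we have $\sum_{g\in F_n}\varphi(x_g)\le\sum_{g\in F_n}\varphi((\pi\overline x)_g)+|F_n|\tau$; combining this with $\varphi\ge0$ and the monotonicity $s(\e)\le s(\e/2)$ (recall $s\in\mathcal{S}^*$) gives, for every $\overline x\in E$,
\[
\exp\!\Big(s(\e)\sum_{g\in F_n}\varphi(x_g)\Big)\le e^{s(\e)|F_n|\tau}\exp\!\Big(s(\e/2)\sum_{g\in F_n}\varphi((\pi\overline x)_g)\Big).
\]
Summing over $E$ and using injectivity of $\pi$ to bound the result by the full sum over $J$, then taking the supremum over $E$ and the infimum over $J$, I obtain $POP_{\e,F_n}(Y,G,\varphi,s)\le e^{s(\e)|F_n|\tau}\,POQ_{\e/2,F_n}(Y,G,\varphi,s)$.

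Finally I would push this estimate through the same sequence of limits. With $Y=PO_\epsilon(\mathfrak{G})$ and after $\lim_{\epsilon\to0}$, applying $\frac{1}{|F_n|}\log$ turns the prefactor into the additive constant $s(\e)\tau$, which survives $\limsup_n$ unchanged; dividing by $s(\e)$ replaces it by the harmless $\tau$, giving
\[
\frac{1}{s(\e)}\limsup_n\frac{1}{|F_n|}\log\Big(\lim_{\epsilon\to0}POP_{\e,F_n}\Big)\le\tau+\frac{1}{s(\e)}\limsup_n\frac{1}{|F_n|}\log\Big(\lim_{\epsilon\to0}POQ_{\e/2,F_n}\Big).
\]
Here the crux is handling the mismatch between the scale $\e$ in the denominator and $\e/2$ inside the $POQ$ term: writing $\frac{1}{s(\e)}=\frac{1}{s(\e/2)}\cdot\frac{s(\e/2)}{s(\e)}$ and invoking the defining property $\lim_{\e\to0}\frac{s(\e/2)}{s(\e)}=1$ of a scale function—together with $\varphi\ge0$, which keeps every quantity nonnegative so that the $\limsup$ is unaffected by the factor tending to $1$—the $\limsup_{\e\to0}$ of the right-hand term equals $POQ(\varphi,s)$ after the substitution $\e\mapsto\e/2$. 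Thus $POP(\varphi,s)\le\tau+POQ(\varphi,s)$ for every $\tau>0$, and letting $\tau\to0$ closes the argument. The main obstacle, and the only place where the hypotheses $\varphi\ge0$ and $s\in\mathcal{S}^*$ are genuinely used, is exactly this bookkeeping of the halved scale and the continuity error term through the nested limits; the combinatorial injection itself is routine.
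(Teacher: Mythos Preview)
Your proof is correct and is essentially the argument the paper intends: the paper's proof simply says ``by a similar argument of Proposition~\ref{p1}, for any $\delta>0$, $POQ(\varphi,s)\le POP(\varphi,s)\le POQ(\varphi,s)+\delta$,'' and your write-up is exactly the natural adaptation of that argument to the pseudo-orbit setting. Since the cover-based quantities $p$ and $q$ are not introduced for pseudo-orbits, the chain $Q\le P\le p\le q$ together with $Q\ge q$ used in Proposition~\ref{p1} must collapse to a direct comparison $POQ\le POP\le POQ+\tau$; your injection from $(F_n,\varepsilon)$-separated sets into $(F_n,\varepsilon/2)$-spanning sets, combined with the scale-halving bookkeeping via $\varphi\ge0$, $s\in\mathcal S^*$, and $\lim_{\varepsilon\to0}s(\varepsilon/2)/s(\varepsilon)=1$, is precisely this.
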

\begin{proof}
By a similar argument of Proposition \ref{p1}, for any $\delta>0$, $$POQ(\varphi,s)\leq POP(\varphi,s)\leq POQ(\varphi,s)+\delta.$$

\end{proof}
\begin{dfn}
The map $$PSP(\cdot,\cdot): C(X,\mathbb{R})\times \mathcal{S}\rightarrow \mathbb{R}\cup\{\infty\}, \varphi\mapsto PSP(\varphi,s):=POQ(\varphi,s)$$ is called \emph{scale pressure of} $G$ \emph{with respect to pseudo-orbits}.
\end{dfn}
\begin{dfn}
We call $(X,G)$ a \emph{Lipschitz $G$-system} if any $t\in\mathfrak{G}$ is a bi-Lipschitz homeomorphism.
\end{dfn}

For $0\leq\epsilon\leq1$, let $X_\epsilon$ be the closure of $PO_\epsilon$.
Since $G$ is countable, we can arrange $G$ as $G:=\{g_i\}_{i\in\mathbb{N}}$ with $g_0=e$, then $(X_1,\overline{d})$ is a compact metric space where $\overline{d}(\overline{x},\overline{y}):=\sum\limits_{i\in \mathbb{N}}\frac{d(x_{g_i},y_{g_i})}{2^{i}}$, $\overline{x},\overline{y}\in X_{1}$. For any $0\leq\epsilon\leq1$, $X_{\epsilon}$ is a compact subset of $X_1$. Then $G$ act on $X_{\epsilon}$ naturally by $t(x_g)_{g\in G}=(x_{tg})_{g\in G}$. Set $\pi_{\epsilon}:X_{\epsilon}\rightarrow X$, $(x_g)_{g\in G}\mapsto x_e$. It is clear that $\pi_{\epsilon}$ is continuous and $\pi_{\epsilon}=\pi_{1}|_{X_{\epsilon}}$.

\begin{lem}\label{jia}
For any $\eta>0$, there exists $\epsilon_0>0$ such that for any $0\leq\epsilon\leq\epsilon_0$ and $\overline{x}\in X_{\epsilon}$, there exists $\overline{y}\in X_{0}$ with $\overline{d}(\overline{x},\overline{y})<\eta,$ where $X_{0}=\{\overline{x}=(x_g)_{g\in G}\in X^{G}:\ x_{tg}=t(x_g), \ \forall t\in\mathfrak{G}, g\in G\}$.
\end{lem}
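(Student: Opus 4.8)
The plan is to argue by contradiction, exploiting the compactness of $X_1$ together with the fact that the relation defining $X_0$ is \emph{closed}. So I would suppose the conclusion fails for some fixed $\eta>0$. Negating the statement and taking $\epsilon_0=1/k$, for every $k\in\mathbb{N}$ one obtains some $\epsilon_k\leq 1/k$ and a point $\overline{x}^{(k)}=(x^{(k)}_g)_{g\in G}\in X_{\epsilon_k}$ such that $\overline{d}(\overline{x}^{(k)},\overline{y})\geq\eta$ for every $\overline{y}\in X_0$. Since $X_{\epsilon_k}\subseteq X_1$ and $(X_1,\overline{d})$ is compact, I would pass to a subsequence along which $\overline{x}^{(k)}\to\overline{x}^{*}$ for some $\overline{x}^{*}=(x^{*}_g)_{g\in G}\in X_1\subseteq X^{G}$.

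Two elementary observations underlie the argument. First, because $\overline{d}(\overline{x},\overline{y})=\sum_i 2^{-i}d(x_{g_i},y_{g_i})$ is the weighted product metric, convergence in $\overline{d}$ is equivalent to coordinatewise convergence, so $x^{(k)}_g\to x^{*}_g$ in $X$ for each fixed $g\in G$. Second, every point of $X_\epsilon$ satisfies the closed inequality $d(x_{tg},t(x_g))\leq\epsilon$ for all $t\in\mathfrak{G}$ and $g\in G$: the strict form holds on $PO_\epsilon(\mathfrak{G})$ by definition, and it passes to the closure $X_\epsilon$ because each generator $t\in\mathfrak{G}$ acts continuously, so $\overline{x}\mapsto d(x_{tg},t(x_g))$ is continuous on $X^{G}$.

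The heart of the proof is to show that the limit $\overline{x}^{*}$ is a genuine orbit, i.e. $\overline{x}^{*}\in X_0$. Fixing $t\in\mathfrak{G}$ and $g\in G$, I would split
\begin{align*}
d(x^{*}_{tg},t(x^{*}_g))
&\leq d(x^{*}_{tg},x^{(k)}_{tg})+d(x^{(k)}_{tg},t(x^{(k)}_g))\\
&\quad+d(t(x^{(k)}_g),t(x^{*}_g)),
\end{align*}
and let $k\to\infty$: the first term vanishes by coordinatewise convergence at $tg$, the middle term is bounded by $\epsilon_k\to0$ by the closed pseudo-orbit inequality on $X_{\epsilon_k}$, and the last term vanishes since $x^{(k)}_g\to x^{*}_g$ and $t$ is continuous. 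Hence $x^{*}_{tg}=t(x^{*}_g)$ for all $t$ and $g$, i.e. $\overline{x}^{*}\in X_0$. This contradicts $\overline{d}(\overline{x}^{(k)},\overline{x}^{*})\geq\eta$, because $\overline{d}(\overline{x}^{(k)},\overline{x}^{*})\to0$, and the contradiction establishes the existence of $\epsilon_0$ for the given $\eta$.

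I expect the only genuinely delicate point to be the second observation, namely pushing the pseudo-orbit estimate through to the closure $X_\epsilon$, which is exactly where the continuity of the generators is indispensable; should one prefer to avoid it, an equivalent route is to first approximate each $\overline{x}^{(k)}\in X_{\epsilon_k}$ by an honest pseudo-orbit $\overline{z}^{(k)}\in PO_{\epsilon_k}(\mathfrak{G})$ within $\overline{d}$-distance $1/k$ and run the same triangle-inequality estimate on $\overline{z}^{(k)}$. The remainder is the routine interplay between the product metric and coordinatewise convergence.
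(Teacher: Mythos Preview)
Your compactness argument is correct. It is, however, a genuinely different route from the paper's proof, which is direct and constructive: given $\eta>0$, the paper first chooses $N$ so that the tail $\sum_{i\ge N}D/2^i<\eta/3$, then picks $\epsilon_0$ by uniform continuity of the finitely many maps $g_0,\dots,g_N$, and for a given $\overline{x}\in X_\epsilon$ simply \emph{exhibits} the approximating orbit, namely $\overline{y}=(g(x_e))_{g\in G}\in X_0$. The remaining estimate compares $y_{g_i}=g_i(x_e)$ with $x_{g_i}$ for $i\le N$ by iterating the pseudo-orbit condition through a generator decomposition of $g_i$ together with uniform continuity, and bounds the tail by $\eta/3$ trivially.

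Your contradiction-and-subsequence argument avoids that explicit tail/head split and the inductive unwinding along generator words entirely; it reduces the lemma to the soft fact that $X_0=\bigcap_{\epsilon>0}X_\epsilon$ is a nested intersection of compact sets inside the compact space $(X_1,\overline{d})$, so every $\overline{d}$-neighbourhood of $X_0$ eventually contains $X_\epsilon$. The price you pay is losing any explicit dependence of $\epsilon_0$ on $\eta$, which the paper's approach in principle supplies. Your handling of the passage from $PO_\epsilon(\mathfrak{G})$ to its closure $X_\epsilon$ via the non-strict inequality $d(x_{tg},t(x_g))\le\epsilon$ is exactly right and is the only place continuity of the generators is needed in your argument.
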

\begin{proof}
For any $\eta>0$, there exists $N\in\mathbb{N}$ such that $\sum\limits_{i=N}^{\infty}\frac{D}{2^{i}}<\frac{\eta}{3}$, where $D:=\max\{d(x,y):x,y\in X\}$. Choose $\epsilon_0>0$ so that $d(x,y)<\epsilon_0$ implies $d_N(x,y)<\frac{\eta}{3}$, where $d_N(x,y):=\max\limits_{0\leq i\leq N}d(g_i(x),g_i(y))$. For any $\overline{y}\in X_{\rho}$ where $0\leq\rho\leq\epsilon_0$, $\overline{x}:=(x_g)_{g\in G}$ with $d(x_e,y_e)<\epsilon$ and $x_g=g(x_e)$, $\overline{x}$ will satisfy $\overline{d}(\overline{x},\overline{y})<\eta.$
\end{proof}
In the following Lemmas, we will add $X$ and $G$ to the notations $p$ and $Q$ in order to distinguish different spaces.
\begin{lem}\label{bbb}There exists $\eta_0>0$ such that for any $0<\eta<\eta_0$, $s\in\mathcal{S}$, $F\in F(G)$ and $\varphi\in C(X,\mathbb{R})$,
$$\lim_{\epsilon\rightarrow 0}p_{\eta,F}(X_{\epsilon},G,\varphi\circ\pi_{\epsilon},s)
=p_{\eta,F}(X_{0},G,\varphi\circ\pi_{0},s).$$
\end{lem}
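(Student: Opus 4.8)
The plan is to establish the two one-sided estimates
\[
p_{\eta,F}(X_{0},G,\varphi\circ\pi_{0},s)\le\liminf_{\epsilon\to0}p_{\eta,F}(X_{\epsilon},G,\varphi\circ\pi_{\epsilon},s)
\]
and
\[
\limsup_{\epsilon\to0}p_{\eta,F}(X_{\epsilon},G,\varphi\circ\pi_{\epsilon},s)\le p_{\eta,F}(X_{0},G,\varphi\circ\pi_{0},s),
\]
which together yield both the existence of the limit and its value. Two facts are used throughout. First, every element of $X_{0}$ is a genuine orbit, hence an $\epsilon$-pseudo-orbit for every $\epsilon>0$, so $X_{0}\subseteq X_{\epsilon}$; moreover $\pi_{\epsilon}=\pi_{1}|_{X_{\epsilon}}$, so $\varphi\circ\pi_{\epsilon}$ and $\varphi\circ\pi_{0}$ agree on $X_{0}$. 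Second, for $\overline{x}=(x_{g})_{g\in G}$ one has $S_{F,\varphi\circ\pi_{\epsilon}}(\overline{x})=\sum_{g\in F}\varphi(x_{g})$, a continuous and therefore uniformly continuous function on the compact space $(X_{1},\overline{d})$; I write $\overline{d}_{F}$ for the associated Bowen metric on $X_{1}$.

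For the lower estimate, fix $\epsilon>0$ and let $\alpha$ be any finite cover of $X_{\epsilon}$ with $\mathrm{mesh}(\alpha,\overline{d}_{F})<\eta$. The trace $\{A\cap X_{0}:A\in\alpha\}$ is a finite cover of $X_{0}$ of mesh no larger, and since $X_{0}\subseteq X_{\epsilon}$ and the potentials agree on $X_{0}$, for each $A$ we have $\sup_{A\cap X_{0}}\exp(s(\eta)S_{F,\varphi\circ\pi_{0}})\le\sup_{A}\exp(s(\eta)S_{F,\varphi\circ\pi_{\epsilon}})$. Passing to infima over covers yields $p_{\eta,F}(X_{0},\dots)\le p_{\eta,F}(X_{\epsilon},\dots)$ for every $\epsilon>0$, whence the first inequality.

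For the upper estimate, which is the crux, fix $\rho>0$ and choose a finite cover $\alpha=\{A_{1},\dots,A_{k}\}$ of $X_{0}$ with $\mathrm{mesh}(\alpha,\overline{d}_{F})=\theta<\eta$ and $\sum_{j}\sup_{A_{j}}\exp(s(\eta)S_{F,\varphi\circ\pi_{0}})\le p_{\eta,F}(X_{0},\dots)+\rho$. Pick $\beta>0$ with $\theta+2\beta<\eta$, small enough that, by uniform continuity of the function $\overline{x}\mapsto S_{F,\varphi}=\sum_{g\in F}\varphi(x_{g})$, enlarging each $A_{j}$ to its open $\overline{d}_{F}$-neighbourhood $\widetilde{A}_{j}=\{\overline{z}\in X_{1}:\overline{d}_{F}(\overline{z},A_{j})<\beta\}$ increases each supremum of $\exp(s(\eta)S_{F,\varphi})$ by a factor at most $e^{\rho}$; note $\mathrm{mesh}(\{\widetilde{A}_{j}\},\overline{d}_{F})\le\theta+2\beta<\eta$. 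Now, since the finitely many translations $\overline{x}\mapsto g\overline{x}$, $g\in F$, are uniformly continuous on the compact $X_{1}$, Lemma \ref{jia} can be upgraded from $\overline{d}$- to $\overline{d}_{F}$-approximation: there is $\epsilon_{0}>0$ such that for all $0\le\epsilon\le\epsilon_{0}$ every $\overline{x}\in X_{\epsilon}$ satisfies $\overline{d}_{F}(\overline{x},X_{0})<\beta$. Since $X_{0}\subseteq\bigcup_{j}A_{j}$, such an $\overline{x}$ lies in some $\widetilde{A}_{j}$, so $\{\widetilde{A}_{j}\}$ covers $X_{\epsilon}$ with mesh below $\eta$ and
\[
p_{\eta,F}(X_{\epsilon},G,\varphi\circ\pi_{\epsilon},s)\le\sum_{j}\sup_{\widetilde{A}_{j}}\exp\!\big(s(\eta)S_{F,\varphi}\big)\le e^{\rho}\big(p_{\eta,F}(X_{0},\dots)+\rho\big).
\]
Letting $\epsilon\to0$ and then $\rho\to0$ gives the second inequality.

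The main obstacle is precisely this upper estimate: the cover mesh is measured in the Bowen metric $\overline{d}_{F}$, whereas Lemma \ref{jia} supplies only $\overline{d}$-approximation, so the essential step is to trade one for the other using compactness of $X_{1}$ and uniform continuity of the translations $g\in F$, while simultaneously keeping the fattened cover below mesh $\eta$ and controlling the exponential sums through uniform continuity of $\varphi$. The threshold $\eta_{0}$ enters only to guarantee that there is room to perform this fattening (one may take $\eta_{0}$ to be the $\overline{d}$-diameter of $X_{1}$), and all estimates above are uniform in $0<\eta<\eta_{0}$.
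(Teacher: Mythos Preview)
Your argument is correct and shares the same skeleton as the paper's: the easy direction comes from $X_{0}\subseteq X_{\epsilon}$, and the hard direction comes from Lemma~\ref{jia} together with uniform continuity of the potential. The execution, however, is genuinely different. The paper fixes a single finite open cover $\alpha_{1,\eta}$ of the ambient space $X_{1}$ with $\overline{d}_{F}$-mesh below $\eta$, and compares the sums $\sum_{A}\sup_{A}\exp(\cdots)$ over the sub-collections $\{A\in\alpha_{1,\eta}:A\cap X_{\epsilon}\neq\emptyset\}$ and $\{A\in\alpha_{1,\eta}:A\cap X_{0}\neq\emptyset\}$, invoking Lemma~\ref{jia} directly in the $\overline{d}$-metric to bound the suprema by a multiplicative factor $e^{s(\eta)\delta}$. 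You instead pick a near-optimal cover of $X_{0}$, fatten each cell in the Bowen metric $\overline{d}_{F}$, and upgrade Lemma~\ref{jia} from $\overline{d}$- to $\overline{d}_{F}$-approximation (via uniform continuity of the finitely many shifts $g\in F$ on the compact, shift-invariant $X_{1}$) so that the fattened cover already catches $X_{\epsilon}$. Your route makes it transparent that the infimum defining $p_{\eta,F}(X_{0},\dots)$ is actually attained in the comparison, and it sidesteps the delicate point in the paper's presentation that the approximant $\overline{y}\in X_{0}$ furnished by Lemma~\ref{jia} need not lie in the same cell as $\overline{x}$; the paper's route, in turn, avoids the fattening bookkeeping and the extra metric upgrade. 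Your closing remark that $\eta_{0}$ is essentially cosmetic is also correct: your argument goes through for every $\eta>0$.
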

\begin{proof}
For $\eta>0$ and $\epsilon>0$. Let $\alpha_{1,\eta}$ be a finite open cover of $X_1$  with mesh $(\alpha_{1,\eta},\overline{d}_F)<\eta$, and $\alpha_{1,\eta}(\epsilon):=\{A:\ A\in \alpha_{1,\eta},\ A\cap X_{\epsilon}\neq\emptyset\}$.
It is clear that $\alpha_{1,\varepsilon}^0(\epsilon)$ is an open cover of $X_\epsilon$.
For any $\delta>0$, there exists $\eta_0>0$ such that for any $\overline{x},\overline{y}\in X_1$ with $\overline{d}(\overline{x},\overline{y})<\eta_0$, $i\in\mathbb{N}$, $|\varphi\circ\pi_1(g_i(\overline{x}))-\varphi\circ\pi_1(g_i(\overline{y}))|<\frac{\delta}{|F|}$.
By Lemma \ref{jia}, we can choose $\epsilon_0>0$ satisfying that
for any $0\leq\epsilon\leq\epsilon_0$ and $\overline{x}\in X_{\epsilon}$, there exists $\overline{y}\in X_{0}$ with $\overline{d}(\overline{x},\overline{y})<\eta_0.$

So for any $0<\eta<\eta_0$ and $0\leq\epsilon\leq\epsilon_0$, we have
\begin{eqnarray*}
&&\sum_{A\in \alpha_{1,\eta}(\epsilon)}\sup_{x\in A}\exp (s(\eta)S_{F,\varphi\circ\pi_{\epsilon}}(x))\\
&\leq&\sum_{A\in \alpha_{1,\eta}(\epsilon)}\sup_{x\in A}e^{s(\eta)\delta}\exp (s(\eta)S_{F,\varphi\circ\pi_0}(x)).
\end{eqnarray*}

Therefore, $p_{\eta,F}(X_{\epsilon},G,\varphi\circ\pi_{\epsilon},s)
\leq p_{\eta,F}(X_{0},G,\varphi\circ\pi_{0},s)e^{s(\eta)\delta}.$

On the other hand, it is obvious that $p_{\eta,F}(X_{\epsilon},G,\varphi\circ\pi_{\epsilon},s)
\geq p_{\eta,F}(X_{0},G,\varphi\circ\pi_{0},s).$
\end{proof}
\begin{lem}\label{ccc}If $(G,X)$ is a Lipschitz $G$-system, $\varphi\in C(X,\mathbb{R})$ with $\varphi\geq0$, and for $\e\in(0,1)$, $s\in\mathcal{S}^*$, then
$Q(X,G,\varphi,s)=Q(X_{0},G,\varphi,s).$
\end{lem}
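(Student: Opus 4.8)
The plan is to reduce the statement to a comparison of Bowen metrics through the projection $\pi_{0}\colon X_{0}\to X$, $(x_g)_{g\in G}\mapsto x_e$. Since $\mathfrak{G}$ generates $G$, the defining relation $x_{tg}=t(x_g)$ forces $x_g=g(x_e)$ for every $\overline{x}\in X_{0}$, so $\pi_{0}$ is a $G$-equivariant homeomorphism with inverse $x\mapsto(g(x))_{g\in G}$; throughout, $Q(X_{0},G,\varphi,s)$ is understood with the pulled-back potential $\varphi\circ\pi_{0}$. The key bookkeeping fact is that the Birkhoff sums are \emph{identical} under this identification: for $\overline{x}\in X_{0}$ and $x=\pi_{0}(\overline{x})$ one has $S_{F_n,\varphi\circ\pi_{0}}(\overline{x})=\sum_{g\in F_n}\varphi((g\overline{x})_e)=\sum_{g\in F_n}\varphi(gx)=S_{F_n,\varphi}(x)$. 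Hence all the work is in comparing $d_{F_n}$ on $X$ with $\overline{d}_{F_n}$ on $X_{0}$, where for $\overline{x},\overline{y}\in X_{0}$ we have $\overline{d}_{F_n}(\overline{x},\overline{y})=\max_{g\in F_n}\sum_{i}2^{-i}d((gg_i)x,(gg_i)y)$.

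First I would dispatch the inequality $SP(\varphi,s)=Q(X,G,\varphi,s)\le Q(X_{0},G,\varphi\circ\pi_{0},s)$. Because $g_0=e$, the $i=0$ term alone gives $\overline{d}_{F_n}(\overline{x},\overline{y})\ge\max_{g\in F_n}d(gx,gy)=d_{F_n}(x,y)$. Therefore any $(F_n,\e)$-spanning set of $X_{0}$ for $\overline{d}_{F_n}$ projects under $\pi_{0}$ to an $(F_n,\e)$-spanning set of $X$ for $d_{F_n}$ whose weight is no larger (the Birkhoff sums agree and $\pi_{0}$ is injective), so $Q_{\e,F_n}(X,G,\varphi,s)\le Q_{\e,F_n}(X_{0},G,\varphi\circ\pi_{0},s)$, and the inequality passes to the limits defining $Q$.

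The reverse inequality is the main obstacle and requires an upper bound on $\overline{d}_{F_n}$. Here I would fix $M\in\mathbb{N}$ and split the defining series at index $M$: the tail is controlled by the diameter $D=\max\{d(u,v):u,v\in X\}$ of $X$, giving $\sum_{i\ge M}2^{-i}d((gg_i)x,(gg_i)y)\le D\,2^{1-M}$, while the head only involves the elements $gg_i$ with $g\in F_n$ and $i<M$, all of which lie in $F_nK_M$ with $K_M=\{g_0,\dots,g_{M-1}\}$. Consequently, if $d_{F_nK_M}(x,y)<\e/4$ and $M$ is chosen so large that $D\,2^{1-M}<\e/2$, then $\overline{d}_{F_n}(\overline{x},\overline{y})<\e$; thus an optimal $(F_nK_M,\e/4)$-spanning set $J$ of $X$ yields, through $\pi_{0}^{-1}$, an $(F_n,\e)$-spanning set of $X_{0}$. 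Using $\varphi\ge0$ and $s\in\mathcal{S}^{*}$ (non-increasing), together with $F_n\subseteq F_nK_M$, one gets $\exp(s(\e)S_{F_n,\varphi}(y))\le\exp(s(\e/4)S_{F_nK_M,\varphi}(y))$, so the weights transfer in the right direction and $Q_{\e,F_n}(X_{0},G,\varphi\circ\pi_{0},s)\le Q_{\e/4,F_nK_M}(X,G,\varphi,s)$.

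It remains to pass to the limits, and this is where the three sources of difficulty must be reconciled at once. For each fixed $\e$ the integer $M=M(\e)$ is fixed, so $\{F_nK_M\}_{n}$ is again a F{\o}lner sequence with $|F_nK_M|/|F_n|\to1$; since the topological scale pressure does not depend on the choice of F{\o}lner sequence, the thickening washes out in the $n$-limit and $Q_\e(X_{0},G,\varphi\circ\pi_{0},s)\le Q_{\e/4}(X,G,\varphi,s)$. Dividing by $s(\e)$ and using the scale property $\lim_{\e\to0}s(\e/4)/s(\e)=1$ then gives $Q(X_{0},G,\varphi\circ\pi_{0},s)\le Q(X,G,\varphi,s)$, which with the first step yields equality. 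I expect the delicate point to be exactly this reverse estimate: unlike $d_{F_n}$, the metric $\overline{d}_{F_n}$ records the \emph{entire} orbit of $x$, so one must truncate via the decaying weights $2^{-i}$ (which forces the enlargement $F_n\mapsto F_nK_M$) while keeping the \emph{ratio} of scales fixed at $1/4$ rather than letting it degenerate with $M$, so that the scale function behaves well. The Lipschitz hypothesis is what guarantees the orbit structure distorts the metric in a controlled way; for commuting actions it even produces the cleaner uniform bound $\overline{d}_{F_n}(\overline{x},\overline{y})\le(\sum_i 2^{-i}\mathrm{Lip}(g_i))\,d_{F_n}(x,y)$ whenever that series converges, which bypasses the truncation entirely, whereas in the general case I would retain the splitting above.
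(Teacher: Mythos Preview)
Your first inequality matches the paper's: projecting via $\pi_0$ turns $(F,\e)$-spanning sets of $X_0$ into $(F,\e)$-spanning sets of $X$ with the same Birkhoff weights. For the reverse inequality, however, you take a genuinely different route. The paper uses the Lipschitz hypothesis directly: after truncating the series at $N=N(\e)$ so that the tail is below $\e/3$, the bound $d(g_i u,g_i v)\le \mathrm{Lip}(g_i)\,d(u,v)$ gives that $d_F(x,y)<\delta:=\e/(4\,\mathrm{Lip}_N)$ forces $\overline d_F(\overline x,\overline y)<\e$ for the \emph{same} finite set $F$. Thus an $(F,\delta)$-spanning set of $X$ lifts to an $(F,\e)$-spanning set of $X_0$ with no thickening of the F{\o}lner sets. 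Your argument instead keeps the scale ratio fixed at $1/4$ (which is exactly what makes the limit $s(\e/4)/s(\e)\to1$ available) but pays for it by enlarging $F_n$ to $F_nK_M$; notably, the bi-Lipschitz assumption plays no role in your main estimate, only in your closing aside.

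This trade has a cost you do not fully address. First, with the shift written in the paper as $t(x_g)_g=(x_{tg})_g$ the thickening appears on the right, and $|F_nK_M|/|F_n|\to1$ would require a \emph{right} F{\o}lner condition that the paper does not assume; under the equivariant convention $(x_{gt})_g$ the relevant enlargement is $K_MF_n$, for which left F{\o}lner suffices, so you should at least flag which convention you are using. Second, and more seriously, you invoke that $Q_{\e/4}$ computed along $\{F_nK_M\}$ coincides with $Q_{\e/4}$ computed along $\{F_n\}$. Independence from the F{\o}lner sequence is standard for the full pressure (after letting $\e\to0$), but at a \emph{fixed} scale $\e$ this is not established anywhere in the paper and would require a separate Ornstein--Weiss type subadditivity argument. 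The paper's Lipschitz route sidesteps both issues entirely by never changing $F_n$, at the price of a scale ratio $\delta/\e=1/(4\,\mathrm{Lip}_{N(\e)})$ that is allowed to depend on $\e$.
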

\begin{proof}
Since $\pi_0$ is a surjective Lipschitz continuous map with Lipschitz constant $L(\pi_0)\leq 1$ and for any $\e>0$, if $\overline{d}(\overline{x},\overline{y})<\e$, then $d(\pi_0(\overline{x}),\pi_0(\overline{y}))<\e$. Thus if $A$ is an $(F,\e)$ spanning set of $X_0$ then $\pi_0(A)$ is an $(n,\e)$ spanning set of $X$.

Therefore, $$Q(X,G,\varphi,s)\leq Q(X_0,G,\varphi,s).$$

On the other hand, for any $\e>0$, there is $N\in\mathbb{N}$ such that $\sum\limits_{i=N}^{\infty}\frac{D}{2^{i}}<\frac{\e}{3}$, where $D:=\max\{d(x,y):x,y\in X\}$. Choose $\delta=\frac{\e}{4Lip_N}$ so that $d(x,y)<\delta$ implies $d_N(x,y)<\frac{\e}{3}$, where $Lip_N:=\max\limits_{0\leq i\leq N}Lip(g_i)$.
For any $(F,\delta)$ spanning set $A$ of $X$ then $Orb(A):=\{(x_g)_{g\in G}\in X_0: x_e\in A\}$ is an $(F,\e)$ spanning set of $X_0$.

Therefore, $$Q(X,G,\varphi,s)\geq Q(X_0,G,\varphi,s).$$
\end{proof}
\begin{the2}\label{B} Let $(X,G)$ be a Lipschitz $G$-system, if $G$ is a finitely generated countable discrete amenable group, and for $s\in\mathcal{S}^*$, then
$SP(\varphi,s)=PSP(\varphi,s).$
\end{the2}

\begin{proof}
The inequality $SP(\varphi,s)\leq PSP(\varphi,s)$ follows directly from definitions. By Lemma \ref{ccc}, it is sufficient to show that $$SP(\varphi,s)\geq PSP(\varphi,s).$$
For any $0<\eta<\eta_0$ and $0\leq\epsilon\leq\epsilon_0$ as in Lemma \ref{bbb}, if $E$ is an $(F,2\eta)$-separated set, $\alpha$ is an open cover with mesh$(\alpha,\overline{d}_{F_n})<\eta$, then
$$\sum_{(x_g)_{g\in G}\in E}\exp (s(2\eta)\sum_{g\in {F_n}}\varphi(x_g))\leq\sum_{A\in\alpha}\sup_{x\in A}e^{s(\eta)\delta}\exp(s(\eta)S_{F_n,\varphi\circ\pi_\epsilon}(x)).$$
Therefore, $\limsup\limits_{n\rightarrow\infty}\frac{1}{|F_n|}\log(POP_{2\eta,F_n}(PO_\epsilon(\mathfrak{G}),G,\varphi,s))\leq p_\eta(X_\epsilon,G,\varphi\circ\pi_\epsilon,s).$

By Lemma \ref{bbb}, $\lim\limits_{\epsilon\rightarrow0}p_\eta(X_\epsilon,G,\varphi\circ\pi_\epsilon,s)
\leq p_\eta(X_0,G,\varphi\circ\pi_0,s).$

Thus $\limsup\limits_{n\rightarrow\infty}\frac{1}{|F_n|}\log(POP_{2\eta,F_n}(PO_\epsilon(\mathfrak{G}),G,\varphi,s))\leq p_\eta(X_0,G,\varphi\circ\pi_0,s).$

Dividing by $s(\e)$, by Lemma \ref{ccc}, letting $\eta\rightarrow 0$, $$PSP(\varphi,s)\leq SP(\varphi,s).$$
\end{proof}

\bibliographystyle{amsplain}

\begin{thebibliography}{n} 
\bibitem{ch} N. P. Chung, K. Lee, \emph{Topological stability and pseudo-orbit tracing property of group actions}. {\bf 146} (2018), 1047-1057.
\bibitem{Td} T. Downarowicz, B. Frej, P.-P. Romagnoli, \emph{Shearer's inequality and Infimum Rule for Shannon entropy and topological entropy}. In Dynamics and numbers (pp. 63-75). Providence: American Mathematical Society.
\bibitem{gro} M. Gromov, \emph{Topological invariants of dynamical systems and spaces of holomorphic maps. I}. Math. Phys. Anal. Geom. {\bf 2}(1999), 4, 323-415.

\bibitem{W} W. Huang, X. Ye, G. Zhang, \emph{Local entropy theory for a countable discrete amenable group action}. Journal of Functional Analysis, 261(4), (2010), 1028-1082.

\bibitem{LT} E. Lindenstrauss, M. Tsukamoto, \emph{From rate distortion theory to metric mean dimension: variational principle}, IEEE Trans. Inform. Theory {\bf64}(2018), 5, 3590-3609.
\bibitem{L2} E. Lindenstrauss, Pointwise theorems for amenable groups, Invent. Math. 146, (2001), 259-295.
\bibitem{LT1} E. Lindenstrauss, M.Tsukamoto.  \emph{Double variational principle for
mean dimension}. Geom. Funct. Anal.. {\bf29}  (2019), 1048-1109.
\bibitem{LW} E. Lindenstrauss, B. Weiss, \emph{Mean topological dimension}. Israel J. Math. {\bf 115}(2000), 1-24.

\bibitem{tom}T. Meyerovitch, \emph{Pseudo-orbit tracing and algebraic actions of countable amenable groups}. {\bf39}, (2019), 2570-2591.
\bibitem{OW} D.S. Ornstein, B. Weiss, \emph{Entropy and isomorphism theorems for actions of amenable groups}. J. Anal. Math. 48, (1987), 1-141.
\bibitem{MT} M. Tsukamoto. \emph{Double variational principle for mean dimension with
potential}. Advances in Mathematics. {\bf361} (2020), 106935.


\bibitem{V} A. Velozo, R. Velozo. \emph{Rate distortion theory, metric mean dimension and measure theoretic entropy.} https://arxiv.org/abs/1707.05762.
\bibitem{Wei} B. Weiss, \emph{Actions of amenable groups, Topics in Dynamics and Ergodic Theory}. (2003) 226-262. London Math. Soc. Lecture Note Ser., 310, Cambridge Univ. Press, Cambridge, 2003.
\bibitem{Y} Y. Zhao, \emph{Measure-theoretic pressure for amenable group actions}. Colloquium Mathematicum 148, (2017), 87-106.
\bibitem{D}D. Zheng, E. Chen and J. Yang, \emph{On large deviations for amenable group actions}. Discrete Contin. Dynam. Systems 36, (2016), 7191-7206.
\bibitem{Z}Y. Zhou. \emph{Tail variational principle for a countable discrete amenable group action}. Journal of Mathematical Analysis and Applications, 433(2016), 1513-1530.





















\bibitem{Yan} K. Yano, \emph{A remark on the topological entropy of homeomorphisms.} Invent. Math. {\bf 59}(1980), 3, 215-220.


\end{thebibliography}

\end{document}